\documentclass[a4paper,12pt]{amsart}


\usepackage{amssymb}
\usepackage{amsmath}
\usepackage{amsthm}
\usepackage{mathtools}
\usepackage{fullpage}
\usepackage{tikz-cd}
\usepackage{tikz}
\usetikzlibrary{calc}
\usepackage[l2tabu,orthodox]{nag}
\usepackage[all, warning]{onlyamsmath}
\usepackage{here}
\usepackage{stmaryrd}
\usepackage{graphicx}
\usepackage{enumitem}
\usepackage{xcolor} 
\usepackage{longtable}
\usepackage{adjustbox}
\usepackage{bm}
\usepackage{mathdots}
\usepackage{setspace}
\usepackage[breaklinks=true,colorlinks=true,linkcolor=blue,citecolor=teal]{hyperref}
\usepackage[all]{xy}

\usetikzlibrary{decorations.markings}
\usetikzlibrary{shapes.geometric}
\tikzset{every loop/.style={min distance=10mm,looseness=10}}
\usetikzlibrary{decorations.markings}
\tikzcdset{scale cd/.style={every label/.append style={scale=#1},
    cells={nodes={scale=#1}}}}


\numberwithin{equation}{section}
\numberwithin{figure}{section}
\numberwithin{table}{section}
\allowdisplaybreaks

\setenumerate{label=(\arabic*),nosep}
\setitemize{nosep}
\newlist{clist}{enumerate}{1}
\setlist*[clist]{label=(\roman*), nosep}


\theoremstyle{plain}
\newtheorem{thm}{Theorem}[section]
\newtheorem{lem}[thm]{Lemma}
\newtheorem{prp}[thm]{Proposition}
\newtheorem{cor}[thm]{Corollary}

\theoremstyle{definition}

\newtheorem{dfn}[thm]{Definition}

\newtheorem{eg}[thm]{Example}
\newtheorem{rmk}[thm]{Remark}

\theoremstyle{definition}
\newtheorem*{ackn}{Acknowledgements}
\newtheorem*{org}{Organization}

 
\usepackage{cleveref}
\usepackage{autonum}
\crefname{thm}{Theorem}{Theorems}
\crefname{asm}{Assumption}{Assumptions}
\crefname{cor}{Corollary}{Corollaries}
\crefname{dfn}{Definition}{Definitions}
\crefname{fct}{Fact}{Facts}
\crefname{lem}{Lemma}{Lemmas}
\crefname{mth}{Theorem}{Theorem}
\crefname{ntn}{Notation}{Notations}
\crefname{prp}{Proposition}{Propositions}
\crefname{rmk}{Remark}{Remarks}
\crefname{eg}{Example}{Examples}
\crefname{section}{\S\!}{\S\S\!}
\crefname{equation}{equation}{equations}
\crefname{exc}{Exercise}{Exercises}


\newcommand{\tit}{\textit}
\newcommand{\ol}{\overline}
\newcommand{\ul}{\underline}

\newcommand{\spn}[1]{\langle #1 \rangle}


\newcommand{\bbC}{\mathbb{C}}
\newcommand{\bbF}{\mathbb{F}}
\newcommand{\bbM}{\mathbb{M}}
\newcommand{\bbN}{\mathbb{N}}
\newcommand{\bbQ}{\mathbb{Q}}
\newcommand{\bbZ}{\mathbb{Z}}


\newcommand{\bfa}{\mathbf{a}}
\newcommand{\bfb}{\mathbf{b}}
\newcommand{\bfp}{\mathbf{p}}
\newcommand{\bfs}{\mathbf{s}}


\usepackage{mathrsfs}
\newcommand{\scC}{\mathscr{C}}
\newcommand{\scH}{\mathscr{H}}
\newcommand{\scL}{\mathscr{L}}


\DeclareMathOperator{\Aut}{Aut}
\DeclareMathOperator{\Ker}{Ker}
\DeclareMathOperator{\Gal}{Gal}
\DeclareMathOperator{\Ind}{Ind}
\DeclareMathOperator{\Irr}{Irr}

\makeatletter
\def\th@remark{%
  \thm@headfont{\bfseries}%
  \normalfont 
  \thm@preskip\topsep \divide\thm@preskip\tw@
  \thm@postskip\thm@preskip
}
\makeatother

\begin{document}

\title[Spanning trees and their relations in Galois covers]
{Spanning trees and their relations in Galois covers}

\author{Kosuke Mizuno} 

\address{GRADUATE SCHOOL OF MATHEMATICS, NAGOYA UNIVERSITY, FURO-CHO, CHIKUSAKU, NAGOYA, 464-8601, JAPAN}
\email{kosuke.mizuno.c1@math.nagoya-u.ac.jp}
\keywords{Graph Theory, Spanning trees, Galois covers, Brauer--Kuroda relations}
\subjclass[2020]{Primary 5C25; Secondary 11R29, 11R32}

\begin{abstract}
This paper studies the relation among the number of spanning trees of intermediate graphs in a Galois cover, building on results for $(\bbZ/2\bbZ)^m$-covers previously established by Hammer, Mattman, Sands, and Valli\`{e}res. We generalize their results to arbitrary finite Galois covers. Using the Ihara zeta function and the Artin--Ihara $L$-function, we prove two formulas which are graph-theoretic analogues of Kuroda's formula and the Brauer--Kuroda relations in algebraic number theory. Furthermore, we prove that a spanning tree formula does not exist if the Galois group is cyclic.   
\end{abstract} 
\maketitle 
\tableofcontents 

\section{Introduction}

 Let $X$ be a finite connected graph. The number of spanning trees of $X$ is denoted by $\kappa(X)$ and called the complexity of $X$. Let $Y/X$ be a Galois covering with Galois group $G=\Gal(Y/X)$ (see Definition \ref{dfn:Gal}). We are interested in the relation among the complexities of intermediate graphs of $Y/X$. In \cite{HKSV}, Hammer, Mattman, Sands, and Valli\`{e}res prove the spanning tree formula in which the complexity of $Y$ can be expressed by those of the intermediate graphs of $Y/X$ in the case $\Gal(Y/X)=(\bbZ/2\bbZ)^{m}$. More precisely, they give the following formula (see \cref{thm:HMSV}):
$$\kappa(Y) =\frac{2^{2^{m}-m-1}}{\kappa(X)^{2^{m}-2}}\prod_{i=1}^{2^{m}-1}\kappa(X_i),$$
where $X_{i}$ are the intermediate graphs with $\Gal(X_i/X)=\bbZ/2\bbZ$ for $i =1,\ldots, 2^{m}-1$. In the case $m=2$, this result is an analogue of Kuroda's formula in algebraic number theory (cf.~\cite{kurodarel1, kurodarel2, kurodarrel3}). 

In this paper, we generalize the formula in \cite{HKSV} to arbitrary finite Galois covers. We prove two spanning tree formulas that impose no conditions on Galois groups. Our two formulas lead to the result of \cite{HKSV} in the case $\Gal(Y/X)=(\bbZ/2\bbZ)^m$ (see Examples \ref{kuroda eg} and \ref{bru val}).

First, we state the first theorem. Let $G$ be a finite group, and let $$\text{$\scH_{G}\coloneq \{H \mid H=\Ker\rho$ for some $\rho\in \Irr(G) \}$},$$ where $\Irr(G)$ is the set of irreducible representations of $G$. By adding the empty set $\emptyset$ to this set, we define $$\ul{\scH_{G}}\coloneq \scH\cup\{\emptyset\}. $$ This set becomes a partially ordered set by the inclusion relation. Our first theorem is a spanning tree formula using the M\"{o}bius function of $\ul{\scH_{G}}$. 


\begin{thm}[$=$ \cref{main1}]\label{main1n}
	Let $G$ be a finite group, and let $\scH_{G}$ be the partially ordered set defined above. Let $\mu\colon \ul{\scH_{G}}\times\ul{\scH_{G}}\to \bbZ$ be the M\"{o}bius function of $\ul{\scH_{G}}$ (cf.~Appendix \ref{mobi fu}). Then, for any Galois cover $Y/X$ with $\Gal(Y/X)=G$, we have   
\[	
\kappa(Y)=\frac{1}{|G|}\prod_{H\in \scH_{G}}([G:H]\cdot\kappa(X_H))^{-\mu(\emptyset ,H)},
\]
where $X_H$ denotes the intermediate graph corresponding to $H$.
\end{thm}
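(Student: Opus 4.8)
The plan is to extract each complexity from the special value at $u=1$ of an Artin--Ihara $L$-function, to package the resulting identities into a single multiplicative relation over $\scH_G$, and then to finish by a Möbius inversion on $\ul{\scH_G}$. First I would record the complexity--determinant lemma. For a connected graph $Z$ with first Betti number $r_Z = |E(Z)|-|V(Z)|+1 \ge 2$, the Ihara determinant $h_Z(u) := \det(I - A_Z u + Q_Z u^2)$ has a simple zero at $u=1$, and a short adjugate computation — using $\mathrm{adj}(L_Z) = \kappa(Z)\,\mathbf{1}\mathbf{1}^{\mathsf{T}}$ from the matrix--tree theorem, where $L_Z = Q_Z - A_Z + I$ is the Laplacian — gives $h_Z'(1) = 2(r_Z-1)\kappa(Z)$, so that $\kappa(Z) = h_Z'(1)/\bigl(2(r_Z-1)\bigr)$.

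Next I would feed in the $L$-function factorization. Writing $h_{X,\rho}(u)$ for the twisted determinant attached to $\rho\in\Irr(G)$, the decomposition of the Ihara zeta function of a cover into $L$-functions yields $h_Y(u) = \prod_{\rho\in\Irr(G)} h_{X,\rho}(u)^{\dim\rho}$, and, since the representations of $G/H$ are exactly the $\rho$ with $H\subseteq\Ker\rho$, one gets $h_{X_H}(u) = \prod_{\rho:\,H\subseteq\Ker\rho} h_{X,\rho}(u)^{\dim\rho}$ for each $H\in\scH_G$ (all such $H$ being normal). The crucial input is that $\delta_\rho := h_{X,\rho}(1) \ne 0$ for every nontrivial $\rho$; this is where connectedness enters, as the twisted Laplacian is nonsingular precisely because no nonconstant harmonic cochains survive outside the trivial isotypic component. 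Hence only the $\rho=\mathbf 1$ factor vanishes at $u=1$, and the product rule collapses $h_{X_H}'(1)$ to $h_X'(1)\prod_{\rho\ne\mathbf1,\,H\subseteq\Ker\rho}\delta_\rho^{\dim\rho}$. Combining this with the lemma and the Betti identity $r_{X_H}-1 = [G:H](r_X-1)$, the factors $2(r_X-1)$ cancel and I obtain the key relation $[G:H]\,\kappa(X_H) = \kappa(X)\prod_{\rho\ne\mathbf1,\,H\subseteq\Ker\rho}\delta_\rho^{\dim\rho}$, which degenerates at $H=\{1\}$ to $|G|\,\kappa(Y) = \kappa(X)\prod_{\rho\ne\mathbf1}\delta_\rho^{\dim\rho}$.

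Finally I would take logarithms and invert. Setting $g(H) = \log\bigl([G:H]\kappa(X_H)/\kappa(X)\bigr)$ and grouping by kernel $K=\Ker\rho\in\scH_G$, the key relation reads $g(H) = \sum_{K\in\scH_G,\,K\supseteq H}\psi(K)$ with $\psi(K) = \sum_{\rho\ne\mathbf1,\,\Ker\rho=K}\dim\rho\cdot\log\delta_\rho$; that is, $g$ is the up-set sum of $\psi$ over $\scH_G$, and the target is $\log\bigl(|G|\kappa(Y)/\kappa(X)\bigr) = \sum_{K\in\scH_G}\psi(K)$. In $\ul{\scH_G}$, whose bottom is $\emptyset$ and whose top is $G=\Ker\mathbf 1$, the defining relation of the Möbius function gives $\sum_{H\in\scH_G,\,H\subseteq K}\mu(\emptyset,H) = -1$ for every $K\in\scH_G$, and in particular $\sum_{H\in\scH_G}\mu(\emptyset,H) = -1$ by taking $K=G$. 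Substituting the up-set expression and exchanging summation order then yields $-\sum_{H\in\scH_G}\mu(\emptyset,H)\,g(H) = \sum_{K\in\scH_G}\psi(K)$, while the very same identity absorbs the stray $\log\kappa(X)$ coming from $\log([G:H]\kappa(X_H)) = g(H)+\log\kappa(X)$; re-exponentiating produces exactly $\kappa(Y) = \frac{1}{|G|}\prod_{H\in\scH_G}\bigl([G:H]\kappa(X_H)\bigr)^{-\mu(\emptyset,H)}$.

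I expect the main obstacle to be the second step: pinning down the relation $[G:H]\kappa(X_H) = \kappa(X)\prod_{\rho}\delta_\rho^{\dim\rho}$ with the correct normalization. This bundles together the nonvanishing $\delta_\rho\ne 0$ for nontrivial $\rho$ (i.e.\ connectedness of the intermediate covers), the exact matching of the $(1-u^2)$-powers and Betti numbers across the $L$-function factorization, and the degenerate case $r_X=1$, where the simple-zero computation fails and must be handled separately. Once that relation is secured, the concluding inversion is purely formal and, pleasantly, eliminates both the unknown special values $\delta_\rho$ and the factor $\kappa(X)$ entirely.
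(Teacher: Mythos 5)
Your proposal is correct and follows essentially the same route as the paper: your key relation $[G:H]\cdot\kappa(X_H)=\kappa(X)\prod_{\rho\neq\rho_0,\,H\subseteq\Ker\rho}h_{Y/X}(1,\rho)^{\deg\rho}$ is exactly the paper's equation \eqref{2} (obtained there from Propositions \ref{artinL pro}(2) and \ref{formula}, which you re-derive by inlining Hashimoto's formula $h_X'(1)=-2\chi(X)\kappa(X)$ and the $L$-function factorization), and your summation swap based on $\sum_{H\in\scH_{G},\,H\subseteq K}\mu(\emptyset,H)=-1$ is the same M\"{o}bius computation the paper performs, carried out additively with logarithms instead of multiplicatively in $\bbC^{*}$ (harmless, since the nonvanishing values $h_{Y/X}(1,\rho)=\det(D_\rho-A_\rho)$ are determinants of positive semidefinite Hermitian matrices, hence positive). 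The one point you flag but leave open---the degenerate case $r_X=1$, i.e.\ $\chi(X)=0$---is dispatched in the paper by noting that $\chi(X)=0$ forces $G$ to be cyclic, so $\{1_G\}\in\scH_{G}$ is the unique atom of $\ul{\scH_{G}}$, whence $\mu(\emptyset,H)=0$ for every $H\in\scH_{G}$ other than $\{1_G\}$ and the claimed formula reduces to the identity $\kappa(Y)=\tfrac{1}{|G|}\cdot|G|\kappa(Y)$.
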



Second, we state a second theorem. This theorem gives an analogue of the Brauer--Kuroda relations in algebraic number theory, which is a product formula for Dedekind zeta functions (cf.~\cite{Brarelkuroda, kurodarel1}). 
We also let a partially ordered set as follows. The set $\scC_{G}$ is given by
$$\scC_{G} \coloneq \{ C  \mid C \text{ is a cyclic subgroup of $G$} \}.$$ The set $\scC_{G}$ becomes a partially ordered set by inclusion relation. We define $$\ol{\scC_{G}}\coloneq \scC_{G} \cup \{\infty\}, $$where $\infty$ is an abstract element greater than any $C\in \scC_{G}$. Our second theorem is a spanning tree formula using the M\"{o}bius function of $\ol{\scC_{G}}$.


\begin{thm}[$=$ \cref{main2}]\label{main2n}
Let $G$ be a finite group, and let $\ol{\scC_{G}}$ be the partially ordered set defined above. Let $\mu\colon \ol{\scC_{G}}\times\ol{\scC_{G}}\to \bbZ$ be the M\"{o}bius function of $\ol{\scC_{G}}$ (cf.~Appendix \ref{mobi fu}). Then, for any Galois cover $Y/X$ with $\Gal(Y/X)=G$, we have   
\[
\kappa(X)=\prod_{ C\in \scC_{G}}([G:C]\cdot \kappa(X_C))^{-\frac{1}{[G:C]} \mu(C,\infty)},
\] 
where $X_C$ denotes the intermediate graph corresponding to $C$.
\end{thm}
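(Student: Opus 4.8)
The plan is to promote the desired multiplicative relation among complexities to a product identity among Ihara zeta functions, and to recover it by comparing leading Taylor coefficients at $u=1$. Two inputs drive this. The first is the special-value formula for $\kappa$: writing $r_W=|E(W)|-|V(W)|+1$ for the first Betti number of a connected graph $W$, Bass's three-term determinant formula $Z_W(u)^{-1}=(1-u^2)^{r_W-1}\det(I-Au+Qu^2)$ (with $Q=D-I$ and Laplacian $L=D-A$) together with the Matrix--Tree theorem yields, for $r_W\ge 2$,
\[
Z_W(u)^{-1}=(-1)^{r_W-1}2^{r_W}(r_W-1)\,\kappa(W)\,(u-1)^{r_W}+O\!\left((u-1)^{r_W+1}\right).
\]
(The linear coefficient of $\det(I-Au+Qu^2)$ at $u=1$ equals $\operatorname{tr}\!\big(\operatorname{adj}(L)\,(2D-A-2I)\big)=2(r_W-1)\kappa(W)$, since $\operatorname{adj}(L)=\kappa(W)\,\mathbf{1}\mathbf{1}^{\top}$ and $\mathbf{1}^{\top}(2D-A-2I)\mathbf{1}=2(r_W-1)$.) The second input is the Artin--Ihara formalism: for the intermediate graph $X_C=Y/C$ one has $Z_{X_C}(u)=\prod_{\rho}L_X(u,\rho)^{\dim\rho^{C}}=L_X\!\left(u,\Ind_C^{G}\mathbf{1}_C\right)$ by Frobenius reciprocity, and $\rho\mapsto L_X(u,\rho)$ is multiplicative on virtual characters.

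The conceptual core is a M\"obius form of Artin's induction theorem: as an identity of virtual characters with rational coefficients,
\[
\mathbf{1}_G=-\sum_{C\in\scC_G}\frac{\mu(C,\infty)}{[G:C]}\,\Ind_C^{G}\mathbf{1}_C .
\]
I would prove this by evaluating both sides at $g\in G$. Using $\Ind_C^{G}\mathbf{1}_C(g)=\tfrac{1}{|C|}\#\{x\in G:x^{-1}\langle g\rangle x\subseteq C\}$ and $[G:C]\,|C|=|G|$, the right-hand side equals $-\tfrac{1}{|G|}\sum_{x\in G}\sum_{C\supseteq x^{-1}\langle g\rangle x}\mu(C,\infty)$; for any cyclic subgroup $D$ the defining recursion of the M\"obius function on $\ol{\scC_G}$ gives $\sum_{D\le C\le\infty}\mu(C,\infty)=0$, hence $\sum_{D\le C<\infty}\mu(C,\infty)=-1$, and summing over the $|G|$ choices of $x$ returns $1=\mathbf{1}_G(g)$. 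Applying $L_X(u,-)$ to this identity and using $Z_X=L_X(u,\mathbf{1}_G)$ converts it into
\[
Z_X(u)=\prod_{C\in\scC_G}Z_{X_C}(u)^{-\mu(C,\infty)/[G:C]} .
\]

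It remains to read off complexities. The exponents of $(u-1)$ on both sides agree because $\sum_C\mu(C,\infty)=-1$ (comparing dimensions) and $\sum_C\mu(C,\infty)/[G:C]=-1$ (pairing the identity with $\mathbf{1}_G$), together with the covering relation $r_{X_C}-1=[G:C](r_X-1)$ coming from $|V(X_C)|=[G:C]\,|V(X)|$ and $|E(X_C)|=[G:C]\,|E(X)|$. The step demanding the most care --- and the main obstacle --- is matching the scalar leading coefficients. Substituting the special-value formula, the powers of $2$ assemble to $2^{-\sum_C\mu(C,\infty)r_{X_C}/[G:C]}=2^{r_X}$ and the signs to $(-1)^{r_X-1}$, exactly the constants attached to $\kappa(X)$. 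The delicate point is the factor $r_{X_C}-1=[G:C](r_X-1)$: its $(r_X-1)$ part collapses, via $\sum_C\mu(C,\infty)/[G:C]=-1$, to a single $r_X-1$ that cancels the one multiplying $\kappa(X)$, while its $[G:C]$ part survives and merges with $\kappa(X_C)$ to produce precisely the factors $[G:C]\,\kappa(X_C)$ inside the product. After these cancellations the identity reduces to the claimed formula, and I would finally dispose of the excluded case $r_X=1$ (where $X$ is a cycle) by a direct check.
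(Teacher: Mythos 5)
Your proposal is correct, and at its core it runs on the same two engines as the paper's proof: the identity $\chi_{\rho_0}=-\sum_{C\in\scC_G}\frac{\mu(C,\infty)}{[G:C]}\chi_{\Ind_C^G(\rho_0)}$ (the paper's \cref{triv}) and the Artin--Ihara functoriality $\zeta_{X_C}(u)=L_{Y/X}(u,\Ind_C^G(\rho_0))$, with complexities extracted from behavior at $u=1$. Where you genuinely diverge is in how these are established and combined. First, the paper obtains the character identity by quoting Artin's induction theorem (\cref{artinn}) and translating the classical M\"obius function into the poset one via \cref{grp lat}; you instead prove it from scratch by evaluating both sides at each $g\in G$, swapping the order of summation, and invoking the recursion $\sum_{D\le C\le\infty}\mu(C,\infty)=0$ --- a self-contained and arguably cleaner derivation that never mentions the number-theoretic M\"obius function. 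Second, the paper never forms a global zeta identity: it proves the per-subgroup relation $[G:C]\kappa(X_C)=\kappa(X)\prod_\rho h_{Y/X}(1,\rho)^{a_{\rho,C}}$ (\cref{inter rel}) by differentiating the $h$-polynomial identity once at $u=1$ via Hashimoto's formula, and then recombines these relations using the scalar identities extracted from \cref{triv}; you instead apply $L_{Y/X}(u,-)$ to the virtual character identity wholesale and compare leading Taylor coefficients of $\zeta_X(u)^{-1}$ at $u=1$, re-deriving the special-value formula via Jacobi's formula and the Matrix--Tree theorem. Your route is a one-shot comparison, but it forces you to track signs, powers of $2$, and the multiplicities $r_W-1$ globally (which you do correctly, using $\sum_C\mu(C,\infty)=-1$, $\sum_C\mu(C,\infty)/[G:C]=-1$, and $r_{X_C}-1=[G:C](r_X-1)$), and it needs $r_X\ge 2$ so that the leading coefficients are nonzero --- the degenerate case $\chi(X)=0$ must be handled separately, exactly as in the paper. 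One wrinkle you should patch: the identity $\zeta_X(u)=\prod_C\zeta_{X_C}(u)^{-\mu(C,\infty)/[G:C]}$ involves rational exponents of functions whose leading coefficients at $u=1$ carry signs, so it is not literally well-defined as written; clear denominators first (raise the character identity to the power $|G|$, getting integer coefficients, hence an honest identity of rational functions), compare leading coefficients there, and only at the end take roots of the resulting identity between positive quantities $\kappa(X)^{|G|}$ and $\prod_C([G:C]\kappa(X_C))^{-|G|\mu(C,\infty)/[G:C]}$. The paper's proof hides the same issue in its final display, where rational powers of $[G:C]\cdot\kappa(X_C)$ are harmless because those bases are positive.
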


In \cref{main2n}, if $\mu(\{1_G\},\infty)=0$, then $\kappa(Y)$ does not appear on the right-hand side of the equation. In this case, \cref{main2n} does not give a formula expressing $\kappa(Y)$. In \cite{excep}, a group $G$ with the condition $\mu(\{1_{G}\},\infty)=0$ is called \tit{exceptional} (see Remark \ref{rmk:excrem} (2), Example \ref{eg:q8}). 


Theorems \ref{main1n} and \ref{main2n} lead to the formula in \cite{HKSV} in the case $\Gal(Y/X)=(\bbZ/2\bbZ)^{m}$ (see Examples \ref{kuroda eg} and \ref{bru val}). Furthermore, we succeed in removing the assumption that the Euler characteristic is non-zero.  Therefore, Theorems \ref{main1n} and \ref{main2n} can be seen as a generalization of the result in \cite{HKSV}. 



Finally, we state the third theorem. If a group $G$ has a faithful irreducible representation, then $G$ is called \tit{irreducibly represented}. $G$ is irreducibly represented if and only if \cref{main1n} implies the trivial equation $\kappa(Y)=\kappa(Y)$ (see Remark \ref{rmk:faithful}). $G$ is a cyclic group if and only if \cref{main2n} implies the trivial equation $\kappa(X)=\kappa(X)$ (see Remark \ref{rmk:excrem} (1)). On the other hand, if $G$ is abelian but not cyclic, Theorems \ref{main1n} and \ref{main2n} give non-trivial formulas. Hence, in the case where $G$ is cyclic, the natural question arises whether it is possible to exist a monomial-type formula. \cref{main3n} is an answer to this question. \cref{main3n} states that there does not exist a monomial-type formula if $G$ is a non-trivial cyclic group.

\begin{thm}[$=$ \cref{main3}]\label{main3n} Let $G$ be a nontrivial cyclic group, and let $H_{1},\ldots,H_{k}$ be all of its subgroups. Then there do not exist $\bm{0}\neq (m_1,\ldots,m_{k})\in \bbZ^{k}$ and $q\in \bbQ$ satisfying the following condition: For any Galois cover $Y/X$ with $\Gal(Y/X)=G$, it follows that $$q\cdot \prod_{i=1}^{k}\kappa(X_{i})^{m_{i}}=1,$$ where $X_{i}$ is the intermediate graph corresponding to $H_{i}$. 
\end{thm}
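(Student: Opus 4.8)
The plan is to convert the hypothetical monomial identity into a multiplicative relation among Artin--Ihara $L$-function special values and then invoke unique factorization in a polynomial ring to rule it out. Write $G=\bbZ/n\bbZ$, so that its subgroups $H_{1},\dots,H_{k}$ are indexed by the divisors $d\mid n$; for each $d$ let $Y_{d}$ be the unique intermediate graph with $\Gal(Y_{d}/X)\cong\bbZ/d\bbZ$, so that $Y_{1}=X$ and $Y_{n}=Y$. The key input is the factorization of complexities through the twisted Laplacian determinants $\ell_{\chi}\coloneq\det L_{\chi}$ that underlies the $L$-function machinery of \cref{main1n,main2n}: for each $d\mid n$ one has
\[
\kappa(Y_{d})=\frac{1}{d}\,\kappa(X)\prod_{\substack{\chi\in\Irr(G)\setminus\{\chi_{0}\}\\ \operatorname{ord}\chi\mid d}}\ell_{\chi},
\]
where $\chi_{0}$ is the trivial character. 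Substituting this into a putative identity $q\prod_{d\mid n}\kappa(Y_{d})^{m_{d}}=1$, the scalars $1/d$ collapse into a single universal constant $q'\coloneq q\prod_{d}d^{-m_{d}}\in\bbQ$, and the identity becomes
\[
q'\,\kappa(X)^{M_{1}}\prod_{\chi\neq\chi_{0}}\ell_{\chi}^{\,M_{\operatorname{ord}\chi}}=1,\qquad M_{e}\coloneq\sum_{\substack{d\mid n\\ e\mid d}}m_{d},
\]
valid for every $\bbZ/n\bbZ$-cover. Since $(m_{d})\mapsto(M_{e})$ is upper-unitriangular for the divisibility order, hence invertible over $\bbZ$ by M\"{o}bius inversion, it suffices to force every $M_{e}=0$.

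I would first kill all $M_{e}$ with $e>1$ using bouquets. Take $X$ to be a single vertex carrying $a_{j}$ loops of voltage $j$ for $j\in\bbZ/n\bbZ\setminus\{0\}$, with all $a_{j}\ge 1$, so the voltages generate $G$ and the cover is connected. Then $\kappa(X)=1$, and a direct computation of the $1\times 1$ twisted Laplacian gives the homogeneous linear form
\[
\ell_{\chi}=L_{\chi}(a)=\sum_{j}a_{j}\bigl(2-\chi(j)-\overline{\chi(j)}\bigr)
\]
in the variables $a=(a_{j})$. The relation above now reads $q'\prod_{\chi\neq\chi_{0}}L_{\chi}(a)^{M_{\operatorname{ord}\chi}}=1$ for all $a\in\bbZ_{\ge1}^{\,n-1}$; as these points are Zariski dense and all $L_{\chi}(a)>0$ there, the rational function $F=\prod_{\chi}L_{\chi}^{M_{\operatorname{ord}\chi}}$ is identically the constant $1/q'$.

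The main obstacle is the primality step that extracts $M_{e}=0$ from $F\equiv 1/q'$. The coefficient vector of $L_{\chi}$ is the function $j\mapsto 2-\chi(j)-\overline{\chi(j)}$, whose discrete Fourier transform is supported exactly on the frequencies $\{0,\pm\chi\}$; comparing the constant (DC) component forces any proportionality constant to be $1$, whence $L_{\chi}$ and $L_{\chi'}$ are proportional if and only if $\chi'\in\{\chi,\overline{\chi}\}$. Thus, ranging over representatives of the conjugate pairs $\{\chi,\overline{\chi}\}$, the $L_{\chi}$ are pairwise non-associate, nonzero degree-one elements of the unique factorization domain $\bbQ[a_{j}]$, and a Laurent monomial in distinct primes equals a nonzero constant only when all its exponents vanish. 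This yields $M_{e}=0$ for every $e>1$ and $q'=1$. I expect the genuine work here to be the Fourier-support verification of non-proportionality together with the bookkeeping that identifies $\ell_{\chi}$ with $L_{\chi}(a)$ on bouquets; once these are in place the rest is formal.

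It then remains to kill $M_{1}$. With $q'=1$ and $M_{e}=0$ for $e>1$, the relation collapses to $\kappa(X)^{M_{1}}=1$ for every $\bbZ/n\bbZ$-cover; applying it to the $\bbZ/n\bbZ$-cover $C_{2n}/C_{2}$ arising from $\bbZ\twoheadrightarrow\bbZ/n\bbZ$, where $\kappa(X)=\kappa(C_{2})=2$, forces $M_{1}=0$. Hence all $M_{e}=0$, and by the invertibility of $(m_{d})\mapsto(M_{e})$ all $m_{d}=0$, contradicting $(m_{1},\dots,m_{k})\neq\bm{0}$. Therefore no such $(m_{1},\dots,m_{k})$ and $q$ exist.
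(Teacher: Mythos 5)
Your proposal is correct in substance, but it takes a genuinely different route from the paper's proof, and the comparison is instructive. The common core is identical: both arguments specialize to voltage bouquets, where \cref{formula} combined with \cref{arcyclic} turns every intermediate complexity into an explicit product of values of the linear forms $L_{\chi}(a)=\sum_{j\neq 0}a_{j}\bigl(2-\chi(j)-\overline{\chi(j)}\bigr)$, and both finish off the exponent of the trivial subgroup by exhibiting covers with $\kappa(X)$ not equal to $1$. The divergence is in the middle step. The paper restricts to a one-parameter family of bouquets ($t$ loops of a single voltage $\bfp^{\bfs-\bfb}$ plus one loop of voltage $1$), takes the degree in $t$ of the hypothetical identity (\cref{deg}), and then must prove the non-vanishing of an explicit determinant via Kronecker products and Cauchy--Binet (\cref{matrix}) to conclude that the resulting linear system, one equation per $\bfb$, has only the zero solution. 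You instead keep all loop multiplicities $(a_{j})_{j\neq 0}$ as independent variables, so the hypothetical identity becomes a monomial relation among multivariate linear forms; pairwise non-proportionality of the $L_{\chi}$ across conjugate pairs (your Fourier argument is correct: the coefficient function $j\mapsto 2-\chi(j)-\overline{\chi(j)}$ vanishes at $j=0$, hence extends to the function $2\chi_{0}-\chi-\overline{\chi}$ on all of $G$, and linear independence of characters forces $\lambda=1$ and $\{\chi',\overline{\chi'}\}=\{\chi,\overline{\chi}\}$) plus unique factorization then replaces the paper's determinant lemma, with your Möbius inversion on the divisor-indexed exponents $(m_{d})\mapsto(M_{e})$ playing the role of the paper's subgroup-indexed linear system. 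What each buys: your argument eliminates the paper's two most computational lemmas (\cref{deg} and \cref{matrix}) in favor of soft UFD and character-theory facts, at the cost of a Zariski-density step and a larger coefficient field; the paper's argument is heavier but entirely explicit.

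Two slips need patching, though neither is structural. First, the coefficients $2-\chi(j)-\overline{\chi(j)}$ generally do not lie in $\bbQ$ (take $n=5$); they lie in $\bbQ(\zeta_{n}+\zeta_{n}^{-1})$, so your UFD must be $\bbQ(\zeta_{n})[a_{j}]$ or $\bbC[a_{j}]$ rather than $\bbQ[a_{j}]$ --- harmless, since polynomial rings over any field are UFDs. Second, your final step applies the derived relation $q'\kappa(X)^{M_{1}}=1$ to the cover $C_{2n}/C_{2}$, but that relation was established only for covers with $\chi(X)\neq 0$ (it rests on \cref{formula}), and $\chi(C_{2})=0$. The fix is immediate: once $M_{e}=0$ for all $e>1$, Möbius inversion gives $m_{d}=0$ for all $d>1$, so the \emph{original} hypothesis itself reads $q\cdot\kappa(X)^{m_{1}}=1$ for every cover, with no Euler-characteristic restriction; evaluating on your bouquets gives $q=1$, and evaluating on $C_{2n}/C_{2}$ gives $2^{m_{1}}=1$, i.e.\ $m_{1}=0$ (this is exactly how the paper kills the exponent $m_{\bm{0}}$). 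Relatedly, for $n=2$ you should require $\sum_{j}a_{j}\geq 2$ so that $\chi(X)\neq 0$ on the bouquets; this removes a single point and does not affect Zariski density.
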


In summary, we can see from the main results whether a non-trivial monomial-type formula exists for any finite group. On the other hand, it is difficult to see whether there exists a formula expressing $\kappa(Y)$ for a finite group $G$, but we answer this question in the case where $G$ is abelian by main theorems.   

\begin{org}
The paper is organized as follows. In \cref{graph}, we introduce basic concepts related to graphs, including the definition of graphs in \cref{gr pre} and Galois theory for graphs in \cref{gal pre}. In \cref{zeta}, we present the Ihara zeta function and the Artin--Ihara $L$-function, which are key tools in our proofs. In \cref{span formula}, we prove the main theorems. We prove \cref{main1} in \cref{kro}, \cref{main2} in \cref{bkru}, and \cref{main3} in \ref{exist}. In \cref{mobi fu}, we introduce the M\"{o}bius function and M\"{o}bius inversion, which are used in the main results. 
\end{org}

\begin{ackn}
	The author would like to thank his supervisor, Prof.~Yusuke Nakamura, for his thoughtful guidance and insightful suggestions. He is also grateful to Ryosuke Murooka for his assistance and valuable comments. Furthermore, I would like to thank Taiga Adachi, Prof.~Futaba Fujie, Prof.~Hiroshi Suzuki, and Sohei Tateno for their valuable comments and suggestions.    
\end{ackn}

\section{Graph Theory}\label{graph}

In this section, we introduce the notion of graphs.
\subsection{Definition of graphs}\label{gr pre}

In this paper, we use the definition of graphs given by Serre and refer the reader to \cite{Sunada} for further details.

\begin{dfn}[\cite{Ser77}, \cite{Sunada}]
	A \tit{graph} $X=(V_X, E_X,o,t,\iota)$ consists of two sets $V_X$, $E_X$ and three maps $o\colon E_X\to V_X, t\colon E_X\to V_X, \iota\colon E_X\rightarrow E_X $ satisfying \begin{enumerate}
	\item $\iota\circ\iota(e)=e$ for all $e\in E_X$,
	\item $\iota(e)\neq e$ for all $e\in E_X$,
	\item $o(\iota(e))=t(e)$ for all $e\in E_X$,
	\item $t(\iota(e))=o(e)$ for all $e\in E_X$.
\end{enumerate}
\end{dfn} 
An element of $V_X$ is called a \tit{vertex}, and an element of $E_X$ is called an \tit{edge}. The maps $o$ and $t$ are called the \tit{origin map} and \tit{terminus map}, respectively, while the map $\iota$ is called the \tit{inversion map}. We often write $\ol{e}$ instead of $\iota(e)$ for $e \in E_X$. For $v\in V_X$, we define $$E_{X,v}\coloneq \{e\in E_X\mid o(e)=v\}.$$   The \tit{Euler characteristic} $\chi(X)$ of $X$ is defined as $$\chi(X)\coloneq |V_X|-\frac{|E_X|}{2}. $$ We can choose a subset $S$ of $E_X$ such that $E_X=S\cup \ol{S}$ and $S\cap \ol{S}=\emptyset$. This subset $S$ is called an \tit{orientation} of $X$. 
\begin{dfn}
	Let $Y=(V_Y, E_Y)$ and $X=(V_X, E_X)$ be two graphs. A \tit{morphism} of graphs $f=(f_V,f_E)\colon Y\rightarrow X$ from $Y$ to $X$ is a pair $f=(f_{V},f_{E})$ of maps $f_V\colon  V_Y\rightarrow V_X$ and $f_E\colon E_Y\rightarrow E_X$ satisfying the following conditions. \begin{enumerate}
	\item $f_V(o(e))=o(f_E(e))$ for all $e\in E_Y$,
	\item $f_V(t(e))=t(f_E(e))$ for all $e\in E_Y$,
	\item $\ol{f_E(e)}=f_E(\ol{e})$ for all $e\in E_Y$.
\end{enumerate}
\end{dfn}
A morphism $f$ is called an \tit{isomorphism} if $f_V$ and $f_E$ are bijective. In this case, $Y$ and $X$ are said to be \tit{isomorphic}. For simplicity of notation, we write $f$ instead of $f_V$ and $f_E$. 

\subsection{Galois theory for graphs}\label{gal pre} 
In this section, we introduce the notions of Galois covers and the fundamental theorem of Galois theory for graphs. For more details, we refers the reader to \cite{Terras} and \cite{Sunada}. We then present the concept of voltage assignments, which is used to construct a Galois cover with an arbitrary group $G$ as the Galois group. For further details, see \cite{GT01} and \cite{ray}.  
\begin{dfn}
	Let $Y$ and $X$ be graphs. A morphism of graphs $\pi\colon Y\rightarrow X$ is called a \textit{covering map} (or simply a \textit{cover}) if the following two conditions are satisfied:\begin{enumerate}
	\item $\pi\colon V_Y\rightarrow V_X$ is surjective,
	\item For each $w\in V_Y$, the restriction of $\pi$ to $E_{Y,w}$ induces a bijective map $\pi|_{E_{Y,w}}\colon E_{Y,w}\rightarrow E_{X, \pi(w)}.$
\end{enumerate}
\end{dfn}  
\noindent The group $\Aut(Y/X)$ is defined by $$\Aut(Y/X)\coloneq \{\sigma\colon Y \to Y\mid \text{$\sigma$ is an isomorphism such that } \pi\circ\sigma=\pi \}.$$ If $\pi: Y\to X$ is a cover, we denote it briefly by $Y/X$.

\begin{dfn}\label{dfn:Gal}
	A covering map $\pi\colon Y\rightarrow X$ is called a \tit{Galois cover} if the following two conditions are satisfied. \begin{enumerate}
		\item $Y$ is connected.
		\item $\Aut(Y/X)$ acts transitively on each fiber $\pi^{-1}(v)$ for $v\in V_X$. 
	\end{enumerate}
\end{dfn}
In this case, we write $\Gal(Y/X)$ instead of $\Aut(Y/X)$. A graph $Z$ is called an \textit{intermediate covering} of $\pi$ if there exist two covering maps $\pi_2: Y\rightarrow Z$ and $\pi_1: Z\rightarrow X$  with $\pi=\pi_1\circ \pi_2$. There is a one-to-one correspondence between the subgroups of $\Gal(Y/X)$ and the equivalence classes of intermediate graphs of $Y/X$ (see Theorem 14.3 in  \cite{Terras}). We denote by $X_{H}$ the intermediate graph corresponding to the subgroup $H$ of the Galois group $\Gal(Y/X)$.     
Let $X_{H}$ and $X_{H'}$ be intermediate graphs of $\pi$. Then $H$ and $H'$ are conjugate if and only if $X_{H}$ and $X_{H'}$ are isomorphic as covers (see Theorem 14.5 in \cite{Terras}).  

We define the voltage assignments, which allow any finite group to be realized as a Galois group.
\begin{dfn}
 Let $X=(V_X,E_X,o,t,\iota)$ be a finite graph, and fix an orientation $S$. Let $G$ be a finite group and $\alpha\colon S\to G $ is a map. Then we extend $\alpha$ to $\ol{S}$ by setting $\alpha(\ol{s})=\alpha(s)^{-1}$ for $s\in S $.
Then a graph $X(\alpha)=(V_{X(\alpha)}, E_{X(\alpha)}, o, t, \iota)$ is defined as follows. 
\[
V_{X(\alpha)}=V_X\times G,\quad E_{X(\alpha)}=E_X\times G,
\]  
\[
o((e,\sigma)) =(o(e),\sigma), \quad t((e,\sigma))=(t(e),\sigma\cdot\alpha(e)),\quad \iota((e,\sigma))=(\iota(e),\sigma\cdot \alpha(e)).
\]
The graph $X(\alpha)$ is called the \tit{derived graph} of $X$, and $\alpha$ is called a \tit{voltage assignment}.
\end{dfn}

We define a morphism $\pi=(\pi_V,\pi_E)\colon X(\alpha)\to X$ by 
\[
\pi_V\colon V_{X(\alpha)}\ni (v,\sigma)\mapsto v\in V_X,\qquad \pi_{E}\colon E_{X(\alpha)}\ni (e,\sigma)\mapsto e\in E_X .
\]

\noindent Then $\pi$ is a covering map. Moreover, $\pi$ is a Galois cover with $\Gal(X(\alpha)/X)=G$ if $X(\alpha)$ is connected. Consequently, any finite group can be realized as a Galois group.


\section{Zeta Functions for Graphs}\label{zeta}
In this section, we introduce the Ihara zeta functions and the Artin--Ihara $L$-functions. For further details, we refer the reader to \cite{Terras}.
\subsection{Ihara zeta functions}

The Ihara zeta function is a rational function associated with a finite connected graph $X$, denoted by $\zeta_{X}(u)$. One notable property of the Ihara zeta function is that it can be explicitly computed using the three-term determinant formula, which expresses $\zeta_{X}(u)$ in terms of the adjacency matrix $A_X$ and degree matrix $D_X$ (see Theorem 2.5. \cite{Terras}): 
$$\zeta_{X}(u)=\frac{(1-u^{2})^{\chi(X)}}{\det(I-A_{X}u+(D_{X}-I)u^{2}).}$$  

Hashimoto establishes a connection between the Ihara zeta function and the number of spanning trees of the graph. By defining $h_X(u)\coloneq \det(I - A_{X}u + (D_{X}-I)u^2)$, we obtain the following result (see \cite{Hashimoto}):
$$h'_X(1)=-2\chi(X)\kappa(X), $$ where $\kappa(X)$ denotes the number of spanning trees of $X$. 
\subsection{Artin--Ihara $L$-functions}
The Artin--Ihara $L$-function is a rational function associated with a Galois cover $Y/X$ and a representation of its Galois group $G=\Gal(Y/X)$. Similar to the Ihara zeta function, the Artin--Ihara $L$-function can be expressed by two matrices $A_{\rho}$, $D_{\rho}$, which are associated with a representation $\rho$ and $Y/X$ (see Theorem 18.15 in \cite{Terras}):

$$L_{Y/X}(u, \rho)=\frac{(1-u^2)^{\chi(X)\deg\rho}}{\det(I-A_{\rho}u+(D_{\rho}-I)u^2)}.$$

 Similar to the Ihara zeta function, we define $$h_{Y/X}(u,\rho)\coloneq \det(I-A_{\rho}u+(D_{\rho}-I)u^2).$$      


In this paper, the trivial representation is denoted by $\rho_{0}$. Here are some of the main properties of the Artin--Ihara $L$-function.

\begin{prp}[cf.~{\cite[Proposition 18.10]{Terras}}]\label{artinL pro}
	Let $Y/X$ be a Galois covering with Galois group $G=\Gal(Y/X)$. Then we have the following properties. 
\begin{enumerate}
	\item If $\rho$ and $\tau$ are representations of $G$, then $$L_{Y/X}(u,\rho\oplus\tau)=L_{Y/X}(u,\rho)L_{Y/X}(u,\tau). $$
	\item Let $H$ be a normal subgroup of $G$, and let $\tilde{\rho}$ be a representation of $G/H=\Gal(X_{H}/X)$. Then $$L_{Y/X}(u,\rho)=L_{X_{H}/X}(u,\tilde{\rho}), $$ where $\rho$ is the lift of $\tilde{\rho}$. 
	\item Let $\rho$ be a representation of $H$. Then $$L_{Y/X}(u,\Ind^{G}_{H}(\rho))=L_{Y/X_{H}}(u,\rho),$$ where $\Ind^{G}_{H}(\rho)$ is the induced representation of $\rho$.  
\end{enumerate}	 
\end{prp}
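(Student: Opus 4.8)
These three statements are the graph-theoretic incarnations of the three formal properties of Artin $L$-functions (additivity under direct sums, inflation, and induction), and since the result is cited from \cite[Proposition 18.10]{Terras} I would prove them through the Euler product expansion of $L_{Y/X}(u,\rho)$ rather than through the three-term determinant formula. Recall that
$$L_{Y/X}(u,\rho)=\prod_{[P]}\det\bigl(I-\rho(\mathrm{Frob}(P))\,u^{\nu(P)}\bigr)^{-1},$$
where $[P]$ runs over the primes of $X$ (equivalence classes of backtrackless tailless primitive closed paths), $\nu(P)$ is the length of $P$, and $\mathrm{Frob}(P)\in G$ is the associated Frobenius class. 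Since the determinant is invariant under conjugation, each local factor is well defined, and the plan is to reduce each of (1)--(3) to a representation-theoretic identity about the local factor $g\mapsto\det(I-\rho(g)\,t)$.

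For (1), the matrix $(\rho\oplus\tau)(g)$ is block diagonal with blocks $\rho(g)$ and $\tau(g)$, so each local factor satisfies $\det(I-(\rho\oplus\tau)(g)\,t)=\det(I-\rho(g)\,t)\det(I-\tau(g)\,t)$. Multiplying over all primes splits the Euler product and gives the claim. This is routine; the same factorization is visible in the three-term formula, where $A_{\rho\oplus\tau}$ and $D_{\rho\oplus\tau}$ are block diagonal and $\deg(\rho\oplus\tau)=\deg\rho+\deg\tau$.

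For (2), the lift factors as $\rho=\tilde\rho\circ q$ through the quotient $q\colon G\to G/H=\Gal(X_H/X)$. The point to check is that the Frobenius of a prime $P$ for the cover $Y/X$ maps under $q$ to the Frobenius of $P$ for the intermediate cover $X_H/X$; this follows directly from the definition of Frobenius in terms of the action on lifts of $P$ through $Y\to X_H\to X$. Consequently $\rho(\mathrm{Frob}_{Y/X}(P))=\tilde\rho(\mathrm{Frob}_{X_H/X}(P))$, the two Euler products coincide factor by factor, and $L_{Y/X}(u,\rho)=L_{X_H/X}(u,\tilde\rho)$.

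I expect (3) to be the main obstacle, since here the base graphs of the two $L$-functions differ ($X$ on the left, $X_H$ on the right), so their Euler products run over different prime sets and must be matched. The mechanism is the graph analogue of Artin's induction formula. For a prime $P$ of $X$ with Frobenius $\sigma=\mathrm{Frob}(P)$, let the cyclic group $\langle\sigma\rangle$ act on the coset space $G/H$ and decompose it into orbits of sizes $f_1,\dots,f_r$; these orbits correspond exactly to the primes $P_1,\dots,P_r$ of $X_H$ lying above $P$, with residue degrees $f_i$ and hence $\nu(P_i)=f_i\,\nu(P)$, while the return element $g_i^{-1}\sigma^{f_i}g_i\in H$ on the $i$-th orbit is the Frobenius $\mathrm{Frob}(P_i)$ in $H=\Gal(Y/X_H)$. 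Writing the induced representation in the coset basis as a block-permutation matrix and computing its characteristic polynomial orbit by orbit yields the local identity
$$\det\bigl(I-(\Ind_H^G\rho)(\sigma)\,u^{\nu(P)}\bigr)=\prod_{i=1}^{r}\det\bigl(I-\rho(\mathrm{Frob}(P_i))\,u^{\nu(P_i)}\bigr).$$
Taking reciprocals and multiplying over all primes $P$ of $X$ reassembles the right-hand side into the full Euler product over the primes of $X_H$, giving $L_{Y/X}(u,\Ind_H^G\rho)=L_{Y/X_H}(u,\rho)$. The delicate part is the splitting dictionary: verifying that the orbit decomposition of $G/H$ under $\langle\sigma\rangle$ matches the combinatorics of how $P$ lifts through $X_H$ and that the return elements are the correct Frobenius classes downstairs. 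Once this is established, the determinant identity is the standard computation for characteristic polynomials of induced representations. Since this is exactly \cite[Proposition 18.10]{Terras}, I would either invoke it directly or reproduce this orbit-counting argument.
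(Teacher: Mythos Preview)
The paper does not prove this proposition at all; it simply cites \cite[Proposition 18.10]{Terras} and uses the statement as background. Your sketch via the Euler product is correct and is essentially the argument in Terras, so there is nothing to compare on the paper's side.
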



In addition to these properties, the Artin--Ihara $L$-function also relates the number of spanning trees of the graph $Y$ to those of $X$:

\begin{prp}[cf.~{\cite[\S 3.2.4]{pengo}}]\label{formula}
	Let $Y/X$ be a Galois cover with Galois group $G=\Gal(Y/X)$, and assume that $\chi(X)\neq 0$. Then $$|G|\cdot\kappa(Y)=\kappa(X)\cdot \prod_{\rho \in \Irr(G)\setminus\{\rho_{0}\} }h_{Y/X}(1,\rho)^{\deg\rho}. $$
\end{prp}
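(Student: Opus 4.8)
The plan is to deduce the formula from the Artin--Ihara factorization of the Ihara zeta function of $Y$ and Hashimoto's formula, by first reducing everything to an identity of the three-term polynomials $h$ and then differentiating at $u=1$.

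First I would establish the factorization $\zeta_{Y}(u)=\prod_{\rho\in\Irr(G)}L_{Y/X}(u,\rho)^{\deg\rho}$. Writing $\rho_{\mathrm{reg}}$ for the regular representation of $G$, the decomposition $\rho_{\mathrm{reg}}=\bigoplus_{\rho\in\Irr(G)}\rho^{\oplus\deg\rho}$ together with \cref{artinL pro} (1) gives $L_{Y/X}(u,\rho_{\mathrm{reg}})=\prod_{\rho}L_{Y/X}(u,\rho)^{\deg\rho}$. On the other hand, $\rho_{\mathrm{reg}}=\Ind^{G}_{\{1\}}(\rho_{0})$, and since $X_{\{1\}}=Y$, \cref{artinL pro} (3) yields $L_{Y/X}(u,\rho_{\mathrm{reg}})=L_{Y/Y}(u,\rho_{0})=\zeta_{Y}(u)$. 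Substituting the explicit three-term expressions for $\zeta_{Y}$ and for each $L_{Y/X}(u,\rho)$, and using $\chi(Y)=|G|\chi(X)$ together with $\sum_{\rho\in\Irr(G)}(\deg\rho)^{2}=|G|$, the powers of $(1-u^{2})$ on both sides agree exactly, so the factorization collapses to the polynomial identity
\[
h_{Y}(u)=h_{X}(u)\cdot\prod_{\rho\in\Irr(G)\setminus\{\rho_{0}\}}h_{Y/X}(u,\rho)^{\deg\rho},
\]
where I separate out the $\rho_{0}$-factor using $h_{Y/X}(u,\rho_{0})=h_{X}(u)$ (as $L_{Y/X}(u,\rho_{0})=\zeta_{X}(u)$).

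Next I would evaluate near $u=1$. Since $X$ is connected, the Laplacian $D_{X}-A_{X}$ is singular, so $h_{X}(1)=\det(D_{X}-A_{X})=0$. Writing $P(u)\coloneq\prod_{\rho\neq\rho_{0}}h_{Y/X}(u,\rho)^{\deg\rho}$ and differentiating the identity $h_{Y}=h_{X}\cdot P$ gives $h'_{Y}(u)=h'_{X}(u)P(u)+h_{X}(u)P'(u)$; evaluating at $u=1$ and using $h_{X}(1)=0$ kills the second term, leaving $h'_{Y}(1)=h'_{X}(1)\,P(1)$. I then apply Hashimoto's formula $h'_{Z}(1)=-2\chi(Z)\kappa(Z)$ to $Z=X$ and $Z=Y$, and substitute $\chi(Y)=|G|\chi(X)$, obtaining $-2|G|\chi(X)\kappa(Y)=-2\chi(X)\kappa(X)\cdot P(1)$. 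Dividing by $-2\chi(X)$, which is legitimate precisely because of the hypothesis $\chi(X)\neq0$, yields $|G|\cdot\kappa(Y)=\kappa(X)\cdot P(1)$, the claimed formula.

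The main obstacle I expect is the careful bookkeeping in the factorization step: one must verify that the exponents of $(1-u^{2})$ cancel exactly, which rests on the two identities $\chi(Y)=|G|\chi(X)$ and $\sum_{\rho}(\deg\rho)^{2}=|G|$, and one must correctly identify $L_{Y/X}(u,\rho_{\mathrm{reg}})$ with $\zeta_{Y}(u)$ via \cref{artinL pro}. The remainder is essentially formal: the differentiation succeeds only because $h_{X}(1)=0$ isolates the single factor carrying complexity information, and the hypothesis $\chi(X)\neq0$ is exactly what permits the final division. It is worth remarking that $P(1)\neq0$ is then forced a posteriori (equivalently $h_{Y/X}(1,\rho)\neq0$ for each nontrivial $\rho$, reflecting the connectedness of $Y$), but this need not be assumed in advance for the argument to go through.
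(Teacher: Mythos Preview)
Your proposal is correct. The paper does not give its own proof of this proposition---it simply cites \cite{pengo}---but your argument is exactly the technique the paper itself deploys in proving \cref{inter rel}: pass from the $L$-function factorization to an identity of $h$-polynomials by cancelling the $(1-u^2)$ factors, then differentiate and set $u=1$ using Hashimoto's formula. In fact \cref{inter rel} with $H=\{1_G\}$ (so that $X_H=Y$, $[G:H]=|G|$, and $a_{\rho,\{1\}}=\deg\rho$ since $\Ind^G_{\{1\}}(\rho_0)$ is the regular representation) is precisely the statement you are proving, so your approach and the paper's are the same.
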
 

\section{Spanning tree formulas for Galois covers}\label{span formula}

In \cite{HKSV}, Hammer, Mattman, Sands, and Valli\`{e}res prove the spanning tree formula expressing the complexity $\kappa(Y)$ of $Y$ in terms of $\kappa(X_{i})$'s of the intermediate graphs $X_{i}$ of $Y/X$ when $\Gal(Y/X)=(\bbZ/2\bbZ)^{m}$.

\begin{thm}[\cite{HKSV}, Theorem 3.6, Remark 3.7]\label{thm:HMSV}
	Let $G=(\bbZ/2\bbZ)^{m}$, and let $H_{1},\ldots,H_{2^{m}-1}$ be subgroups of $G$ with index $2$. Then, for any Galois cover $Y/X$ with $\Gal(Y/X)=(\bbZ/2\bbZ)^m$ satisfying $\chi(X)\neq 0$, we have   
\[
\kappa(Y) =\frac{2^{2^{m}-m-1}}{\kappa(X)^{2^{m}-2}}\prod_{i=1}^{2^{m}-1}\kappa(X_i),
\]
where $X_1,\ldots,X_{2^{m}-1}$ are the intermediate graphs corresponding to $H_{1},\ldots,H_{2^{m}-1}$.
\end{thm}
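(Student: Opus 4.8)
The plan is to derive the formula directly from Proposition \ref{formula} applied to the cover $Y/X$, using the representation theory of $G=(\mathbb{Z}/2\mathbb{Z})^m$ to evaluate the resulting product of $h$-factors. Since $G$ is abelian of exponent $2$, every irreducible representation has degree $1$ and there are exactly $2^m$ of them; each of the $2^m-1$ nontrivial characters $\rho$ has image $\{\pm 1\}$, so $H_\rho\coloneq\ker\rho$ is a subgroup of index $2$. First I would verify that $\rho\mapsto\ker\rho$ is a bijection from the set of nontrivial characters onto the set of index-$2$ subgroups $\{H_1,\dots,H_{2^m-1}\}$: this is the identification of nonzero linear functionals on $\mathbb{F}_2^m$ with their kernel hyperplanes, together with the fact that $G/H_i\cong\mathbb{Z}/2\mathbb{Z}$ has a unique nontrivial character. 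With this bijection in hand, Proposition \ref{formula} for $Y/X$ reads
\[
2^m\cdot\kappa(Y)=\kappa(X)\cdot\prod_{i=1}^{2^m-1}h_{Y/X}(1,\rho_i),
\]
where $\rho_i$ is the character with $\ker\rho_i=H_i$ and every exponent $\deg\rho_i$ equals $1$.

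The heart of the argument is to rewrite each factor $h_{Y/X}(1,\rho_i)$ in terms of $\kappa(X_i)$. Here I would use Proposition \ref{artinL pro}(2): since $H_i$ is normal (every subgroup of an abelian group is) and $\rho_i$ is the inflation of the nontrivial character $\tilde\rho_i$ of $G/H_i=\Gal(X_i/X)\cong\mathbb{Z}/2\mathbb{Z}$, we get $L_{Y/X}(u,\rho_i)=L_{X_i/X}(u,\tilde\rho_i)$. Because the numerators $(1-u^2)^{\chi(X)\deg\rho}$ in the determinant formula coincide (the base graph $X$, hence $\chi(X)$, is common and both degrees are $1$), this forces the equality of denominators $h_{Y/X}(u,\rho_i)=h_{X_i/X}(u,\tilde\rho_i)$ as polynomials, in particular at $u=1$. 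Next I would apply Proposition \ref{formula} to the order-$2$ cover $X_i/X$: since $\Irr(\mathbb{Z}/2\mathbb{Z})=\{\rho_0,\tilde\rho_i\}$, the product collapses to a single factor, giving $2\kappa(X_i)=\kappa(X)\,h_{X_i/X}(1,\tilde\rho_i)$, whence
\[
h_{Y/X}(1,\rho_i)=h_{X_i/X}(1,\tilde\rho_i)=\frac{2\,\kappa(X_i)}{\kappa(X)}.
\]

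Substituting these values into the displayed identity for $Y/X$ and simplifying is then routine: the product becomes $2^{2^m-1}\kappa(X)^{-(2^m-1)}\prod_{i}\kappa(X_i)$, and after multiplying by $\kappa(X)$ and dividing by $2^m$ one obtains exactly $\kappa(Y)=2^{2^m-m-1}\kappa(X)^{-(2^m-2)}\prod_{i=1}^{2^m-1}\kappa(X_i)$. The two applications of Proposition \ref{formula} require $\chi(X)\neq 0$, which is part of the hypotheses. I expect the only genuinely delicate point to be the passage from equality of $L$-functions to equality of the $h$-polynomials at $u=1$: one must confirm that the numerator factors cancel cleanly, so that no spurious zero or pole at $u=1$ is hidden in the ratio, and that the counting of nontrivial characters against index-$2$ subgroups is an exact bijection with no character counted twice. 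Everything else is bookkeeping of exponents.
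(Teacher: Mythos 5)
Your proof is correct, and it takes a more direct route than the paper does. The paper never proves \cref{thm:HMSV} from scratch; it derives it as a special case of its general results, once in Example \ref{kuroda eg} from \cref{main1} (M\"{o}bius inversion over the poset $\ul{\scH_{G}}$ of kernels of irreducible representations, using $\mu(\emptyset,H_i)=-1$ and $\mu(\emptyset,G)=2^{m}-2$), and a second time in Example \ref{bru val} from \cref{main2} by induction on $m$. Your argument uses exactly the same two key inputs --- Proposition \ref{formula} and Proposition \ref{artinL pro}(2) --- but dispenses with the M\"{o}bius machinery: since every nontrivial character of $(\bbZ/2\bbZ)^{m}$ has a distinct index-$2$ kernel, the inversion needed to isolate each factor $h_{Y/X}(1,\rho_i)$ collapses to a single division by $\kappa(X)$, namely $h_{Y/X}(1,\rho_i)=2\kappa(X_i)/\kappa(X)$, obtained from Proposition \ref{formula} applied to the subcover $X_i/X$. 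This is precisely the content of the paper's identity $g(H_i)=f(H_i)f(G)=[G:H_i]\kappa(X_{H_i})$ in the proof of \cref{main1}; in the general setting several irreducible representations can share a kernel, and one then genuinely needs the M\"{o}bius function of $\ul{\scH_{G}}$ to untangle the product --- that is the complication your specialization sidesteps. So your proof buys self-containedness and transparency for this case, while the paper's route buys the generalization to arbitrary finite Galois groups. Your two flagged ``delicate points'' are both sound: the numerator factors $(1-u^2)^{\chi(X)}$ agree exactly because both degrees equal $1$, so the $h$-polynomials coincide identically (not just at $u=1$), and the kernel map is indeed a bijection from nontrivial characters onto the $2^m-1$ hyperplanes of $\bbF_2^m$, each index-$2$ subgroup arising from the unique nontrivial character of its quotient.
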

We will generalize the above formula in \cite{HKSV} to arbitrary finite Galois covers.

\subsection{The generalization of Kuroda's formula}\label{kro}
In this section, we prove \cref{main1} and examine several examples.

\begin{dfn}\label{dfn:hg}
For a finite group $G$, we define 
\[
\text{$\scH_{G}\coloneq \{H \mid H=\Ker \rho$ for some $\rho\in \Irr(G) \}$},
\] 
where $\Irr(G)$ denotes the set of isomorphism classes of irreducible representations of $G$. By adding the empty set \(\emptyset\) to this set, we also define 
\[
\ul{\scH_{G}}\coloneq  \scH_{G}\cup\{\emptyset\}.
\]
The set $\ul{\scH_{G}}$ becomes a partially ordered set by the inclusion relation. 	
\end{dfn}


\begin{thm}\label{main1}
	Let $G$ be a finite group, and let $\scH_{G}$ be the partially ordered set defined in Definition \ref{dfn:hg}. Let $\mu\colon \ul{\scH_{G}}\times\ul{\scH_{G}}\to \bbZ$ be the M\"{o}bius function of $\ul{\scH_{G}}$ (cf.~Appendix \ref{mobi fu}). Then, for any Galois cover $Y/X$ with $\Gal(Y/X)=G$, we have   
\[	
\kappa(Y)=\frac{1}{|G|}\prod_{H\in \scH_{G}}([G:H]\cdot\kappa(X_H))^{-\mu(\emptyset ,H)},
\]
where $X_H$ denotes the intermediate graph corresponding to $H$.
\end{thm}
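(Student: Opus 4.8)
The plan is to start from \cref{formula}, which gives
$|G|\cdot\kappa(Y)=\kappa(X)\cdot\prod_{\rho\in\Irr(G)\setminus\{\rho_0\}}h_{Y/X}(1,\rho)^{\deg\rho}$,
and to rewrite the product over irreducible representations as a product over the subgroups $H\in\scH_G$. The key organizing idea is that each $H=\Ker\rho$ collects together all irreducibles having that particular kernel, and that the contribution of all irreducibles with kernel \emph{containing} $H$ should be expressible in terms of the intermediate graph $X_H$ alone. Concretely, for a normal subgroup $H\trianglelefteq G$ the representations of $G$ that factor through $G/H$ are exactly the lifts of representations of $G/H$, and by \cref{artinL pro}(2) their $L$-functions coincide with the $L$-functions for the subcover $X_H/X$. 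Applying \cref{formula} to the cover $X_H/X$ (whose Galois group is $G/H$) therefore packages the product of $h_{Y/X}(1,\rho)^{\deg\rho}$ over all nontrivial $\rho$ with $H\subseteq\Ker\rho$ into the single quantity $[G:H]\cdot\kappa(X_H)/\kappa(X)$.

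First I would make this precise by defining, for each $H\in\scH_G$, the "cumulative" factor
$P(H)\coloneq\prod_{\substack{\rho\in\Irr(G)\setminus\{\rho_0\}\\ H\subseteq\Ker\rho}}h_{Y/X}(1,\rho)^{\deg\rho}$,
and showing via \cref{formula} applied to $X_H/X$ that
$P(H)=\dfrac{[G:H]\cdot\kappa(X_H)}{\kappa(X)}$.
This uses that $\Irr(G/H)$ is in bijection with $\{\rho\in\Irr(G)\mid H\subseteq\Ker\rho\}$, that degrees and the value $h(1,\cdot)$ are preserved under the lift (from the equality of $L$-functions in \cref{artinL pro}(2), comparing the three-term determinant expressions), and that the trivial representation is the one omitted in both formulas. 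Then I would isolate the "exact kernel" factor
$Q(H)\coloneq\prod_{\substack{\rho\in\Irr(G)\setminus\{\rho_0\}\\ \Ker\rho=H}}h_{Y/X}(1,\rho)^{\deg\rho}$,
so that $P(H)=\prod_{H\subseteq H',\,H'\in\scH_G}Q(H')$ with the product taken over the poset $\ul{\scH_G}$.

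The combinatorial heart of the argument is then a M\"{o}bius inversion on the poset $\ul{\scH_G}$. The relation $P(H)=\prod_{H'\supseteq H}Q(H')$ (additively, after taking logarithms, a summation over the up-set of $H$) inverts to give each $Q(H)$, and multiplying the resulting expressions over all $H\in\scH_G$ regroups the total product $\prod_{\rho\neq\rho_0}h_{Y/X}(1,\rho)^{\deg\rho}=\prod_{H\in\scH_G}Q(H)$ into $\prod_{H\in\scH_G}P(H)^{-\mu(\emptyset,H)}$, where $\emptyset$ is the added bottom element playing the role of the "all of $\Irr$" stratum. Substituting $P(H)=[G:H]\cdot\kappa(X_H)/\kappa(X)$ and simplifying the resulting power of $\kappa(X)$ should collapse the $\kappa(X)$ factors (using $\sum_{H}\mu(\emptyset,H)$ summing to the appropriate value dictated by the defining property of $\mu$), leaving exactly
$\kappa(Y)=\frac{1}{|G|}\prod_{H\in\scH_G}\bigl([G:H]\cdot\kappa(X_H)\bigr)^{-\mu(\emptyset,H)}$.

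I expect the main obstacle to be setting up the poset bookkeeping so that the added element $\emptyset$ and the M\"{o}bius function $\mu(\emptyset,-)$ correctly encode the inclusion–exclusion over kernels, and in particular verifying that the up-sets $\{H'\in\scH_G\mid H'\supseteq H\}$ partition $\Irr(G)\setminus\{\rho_0\}$ via the exact-kernel strata without double counting. A secondary technical point is confirming that $\scH_G$ is genuinely closed enough under the relevant operations for $\ul{\scH_G}$ to be the right poset—every kernel of an irreducible lies in $\scH_G$, but one must check that the cumulative/exact decomposition $P(H)=\prod_{H'\supseteq H}Q(H')$ only involves indices that are themselves kernels of irreducibles, so that the products range exactly over $\scH_G$. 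Once the correspondence $P(H)=[G:H]\kappa(X_H)/\kappa(X)$ and the partition of $\Irr$ by exact kernels are established, the remainder is a routine M\"{o}bius inversion and a cancellation of the $\kappa(X)$ and $|G|$ factors.
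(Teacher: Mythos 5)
Your proposal follows essentially the same route as the paper's proof. Your $P(H)$ and $Q(H)$ are, up to a factor of $\kappa(X)$, exactly the paper's functions $g$ and $f$ on $\ul{\scH_{G}}$: the paper sets $g(H)=\kappa(X)\cdot P(H)$, which by \cref{artinL pro}(2) and \cref{formula} equals $[G:H]\cdot\kappa(X_H)$, and sets $f(H)=Q(H)$ for $H\neq G$ but $f(G)=\kappa(X)$, so the prefactor $\kappa(X)$ from \cref{formula} is absorbed into $\prod_{H\in\scH_{G}}f(H)$ rather than cancelled at the end. Your variant, keeping $\kappa(X)$ separate and cancelling it via $\sum_{H\in\scH_{G}}\mu(\emptyset,H)=-1$ (valid because $G=\Ker\rho_0$ is the top of $\ul{\scH_{G}}$, so the interval $[\emptyset,G]$ is the whole poset), works; the regrouping you describe is precisely the paper's use of the dual recursion in \cref{mob def}, namely $\sum_{H\in\scH_{G},\,H\subseteq N}\mu(H,N)=-\mu(\emptyset,N)$. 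The poset bookkeeping you flag as a potential obstacle is indeed fine: every nontrivial irreducible has kernel in $\scH_{G}$ by definition, the exact-kernel strata are disjoint, and your relation $P(H)=\prod_{H'\supseteq H}Q(H')$ holds for every $H\in\ul{\scH_{G}}$, including $H=\emptyset$.

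The one genuine gap is the case $\chi(X)=0$. Your entire argument runs on \cref{formula}, which is stated only under the hypothesis $\chi(X)\neq 0$, and you apply it both to $Y/X$ and to each $X_H/X$, whose base graph is again $X$. The theorem, however, makes no assumption on $\chi(X)$; removing that hypothesis from \cref{thm:HMSV} is one of the stated improvements of the paper. So, as written, your proof only establishes the theorem when $\chi(X)\neq 0$. The missing case is short but must be said, and the paper opens its proof with it: if $\chi(X)=0$, then $G=\Gal(Y/X)$ is cyclic (Remark 2.7 in \cite{ray}), hence has a faithful irreducible representation, so $\{1_G\}\in\scH_{G}$ is the minimum of $\scH_{G}$; consequently $\mu(\emptyset,\{1_G\})=-1$ and $\mu(\emptyset,H)=0$ for every other $H\in\scH_{G}$, and the claimed formula collapses to the true identity $\kappa(Y)=\frac{1}{|G|}\cdot|G|\cdot\kappa(Y)$. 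Adding this observation makes your proof complete.
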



\begin{proof}

If $\chi(X)= 0$, then $\Gal(Y/X)$ is cyclic (see Remark 2.7 in \cite{ray}). Then $\scH_{G} $ contains the trivial group $\{1_{G}\}$ of $G$, and hence, we have $\mu(\emptyset, H) = 0$ for any $H \in \scH_{G}\setminus\{1_G\}$. In this case, the theorem is true. From now on, we assume $\chi(X)\neq 0$.

Define two functions $f,g\colon \ul{\scH_{G}}\to \bbC^{*}
$ as follows


\begin{align}
  f(H) &= 
  \begin{cases}     
    \kappa(X) & \text{if $H=G$},\\
    \displaystyle\prod_{\substack{\rho\in \Irr(G)\setminus\{\rho_0\} \\ H=\Ker(\rho)} } h_{Y/X}(1,\rho)^{\deg\rho} & \text{otherwise},
  \end{cases} \\
  g(H) &= \kappa(X) \cdot \prod_{\substack{\rho\in \Irr(G)\setminus\{\rho_0\} \\ H\subseteq \Ker\rho}} h_{Y/X}(1,\rho)^{\deg\rho}.
\end{align}

\noindent Then we have
\begin{equation}
    g(H)=\prod_{\substack{N\in \scH_{G}; \\H\subseteq N}} f(N).
\end{equation}
for any $H\in \ul{\scH_{G}}$.
By the M\"{o}bius inversion formula (Lemma \ref{Mö inv}), we have that 
\begin{equation}\label{1}
	 f(H)=\prod_{\substack{N\in \scH_{G}; \\H\subseteq N}}g(N)^{\mu(H,N)}. \tag{1}
\end{equation}
By Proposition \ref{formula} and \ref{artinL pro} (2), for any $H\in \scH_{G}$, we have  

\begin{equation}
g(H) \overset{\text{\ref{artinL pro}(2)}}{=}\kappa(X)\cdot \prod_{\tilde{\rho}\in \Irr(G/H)\setminus\{\rho_0\}} h_{X_{H}/X}(1,\tilde{\rho})^{\deg\tilde{\rho}}\overset{\text{\ref{formula}}}{=}[G:H]\cdot \kappa(X_H).\tag{2}\label{2} \end{equation}

\noindent From the above results, we obtain that \begin{align}
	|G|\cdot\kappa(Y) &\overset{\text{\ref{formula}}}{=} \kappa(X)\cdot\prod_{\rho\in \Irr(G)\setminus\{\rho_0\} }h_{Y/X}(1,\rho)^{\deg\rho}\\ &=\prod_{H\in \scH_{G}}f(H)\\ &\overset{\eqref{1}}{=}\prod_{H \in \scH_{G}}\prod_{\substack{N\in \scH_{G}; \\ H\subseteq N}}g(N)^{\mu(H,N)}\\ &\overset{\eqref{2}}{=} \prod_{H\in \scH_{G}}\prod_{\substack{N\in \scH_{G} ;\\ H\subseteq N}}([G:N]\cdot \kappa(X_{N}))^{\mu(H,N)}\\&= \prod_{N\in \scH_{G}}\prod_{\substack{H\in \scH_{G} ;\\ H\subseteq N}}([G:N]\cdot \kappa(X_{N}))^{\mu(H,N)} \\ &\overset{\text{\ref{mob def}}}{=}\prod_{N\in \scH_{G}}([G:N]\cdot\kappa(X_N))^{-\mu(\emptyset,N)}. 
\end{align}
Therefore, we conclude that 
\begin{equation}
	\kappa(Y)=\frac{1}{|G|}\prod_{H\in \scH_{G}}([G:H]\cdot\kappa(X_H))^{-\mu(\emptyset ,H)}.
\end{equation}
\end{proof}


\begin{rmk}\  \label{rmk:faithful}

\begin{enumerate}
	\item If a group $G$ has a faithful irreducible representation, then $G$ is called \tit{irreducibly represented}. For example, cyclic groups are irreducibly represented. In this case, the assertion of \cref{main1} becomes trivial. Indeed, $\scH_{G} $ contains the trivial group $\{1_{G}\}$ of $G$ in this case, and hence, we have $\mu(\emptyset, H) = 0$ for any $H \in \scH_{G}\setminus\{1_G\}$. In Table \ref{figure}, we provide a list of groups with order at most $24$, and we indicate whether they are irreducibly represented or not. We computed Table \ref{figure} using data from \cite{GroupNames}.
	\item When $\chi(X)=0$, it is known that $\Gal(Y/X)=G$ is always a cyclic group, and we have $\kappa(Y)=|G|\cdot\kappa(X)$ (see Remark 2.7 in \cite{ray}).
\end{enumerate}
\end{rmk}

\begin{eg}\label{kuroda eg}
	Let $Y/X$ be a Galois cover with Galois group $G=(\bbZ/2\bbZ)^{m}$. $G$ has $2^{m}-1$ subgroups $H_{1},\ldots,H_{2^{m}-1}$ of index $2$, and we have $$\ul{\scH_{G}}=\{\emptyset,H_1,\ldots,H_{2^{m}-1}, G\}.$$ Furthermore, we have $\mu(\emptyset,G)=2^{m}-2$, and $\mu(\emptyset,H_{i})=-1$ for $1,\ldots,2^{m}-1$. Therefore, by \cref{main1}, we obtain  
	\begin{align}
		\kappa(Y) &=\frac{1}{|G|}([G:G]\cdot\kappa(X)^{-(2^m-2)})\prod_{i=1}^{2^m-1}([G:H_i]\cdot\kappa(X_i))\\
		&= \frac{2^{2^{m}-m-1}}{\kappa(X)^{2^{m}-2}}\prod_{i=1}^{2^{m}-1}\kappa(X_i),
	\end{align}
where $X_1,\ldots,X_{2^{m}-1}$ are the intermediate graphs corresponding to $H_{1},\ldots,H_{2^m-1}$. This formula coincides with the formula obtained by Hammer, Mattman, Sands, and Valli\`{e}res in \cref{thm:HMSV}. Therefore, \cref{main1} can be seen as a generalization of \cref{thm:HMSV}.     
\end{eg}


\begin{eg}

Let $G=\bbZ/2\bbZ\times\bbZ/6\bbZ$. Then we have 
\[
\ul{\scH_{G}}=\{\emptyset, H_{1},\ldots,H_7,G \},
\]
where $H_1=\spn{(1,0)}$, $H_{2}=\spn{(1,3)}$, $H_{3}=\spn{(0,3)}$, $H_{4}=\spn{(1,0),\ (0,3)}$, $H_{5}=\spn{(1,2)}$, $H_{6}=\spn{(1,1)}$, $H_{7}=\spn{(0,1)}$.
Let $Y/X$ be a Galois cover with $\Gal(Y/X)=G$. By \cref{main1}, we have \begin{align}
	\kappa(Y) &= \frac{1}{|G|}\prod_{H\in \scH_{G}}([G:H]\cdot\kappa(X_H))^{-\mu_{\scH_{G}}(\emptyset ,H)}\\ 
	&= \frac{1}{12}\cdot\kappa(X)^{0}\cdot 6\kappa(X_{1}) \cdot 6\kappa(X_{2}) \cdot 6\kappa(X_{3}) \notag \\
	&\quad \cdot (3\kappa(X_{4}))^{-2}\cdot
(2\kappa(X_{5}))^{0}\cdot (2\kappa(X_{6}))^{0}\cdot (2\kappa(X_{7}))^{0}  \\ 
	&= 2\cdot\frac{\kappa(X_{1}) \kappa(X_{2}) \kappa(X_{3})}{{\kappa(X_4)}^2},
\end{align}
where the $X_i$ are corresponding to $H_i$.

	Let $X$ be the bouquet graph with two loops. Take an orientation $S=\{e_1,e_2\}$ of $X$ and define a voltage assignment $\alpha\colon S\to G$ by $$\alpha(e_1)=(1,0),\ \alpha(e_2)=(0,1). $$
Then we have the Galois covering $\pi\colon Y=X(\alpha) \to X$ as in Figure \ref{fig:graph_mapping}. 

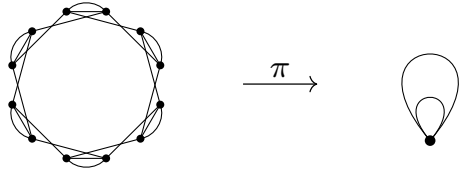
\begin{figure}[h]
\centering
\begin{tikzcd}
  \begin{tikzpicture}[baseline={([yshift=-0.5ex] current bounding box.center)}] 

\node[draw=none, minimum size=2cm ,regular polygon,regular polygon sides=12] (a) {};

\foreach \x in {1,2,...,12}
  \fill (a.corner \x) circle[radius=1.5pt];
  
\foreach \y\z in {1/3,2/4,3/5,4/6,5/7,6/8,7/9,8/10,9/11,10/12,11/1,12/2}
  \path (a.corner \y) edge (a.corner \z);
  
\foreach \y\z in {1/2,3/4,5/6,7/8,9/10,11/12}
  \path (a.corner \y) edge (a.corner \z); 

\path (a.corner 1) edge [bend right=50] (a.corner 2);
\path (a.corner 3) edge [bend right=50] (a.corner 4);
\path (a.corner 5) edge [bend right=50] (a.corner 6);
\path (a.corner 7) edge [bend right=50] (a.corner 8);
\path (a.corner 9) edge [bend right=50] (a.corner 10);
\path (a.corner 11) edge [bend right=50] (a.corner 12);  
   
\end{tikzpicture} \qquad \arrow[r,"\scalebox{1.5}{$\pi$}" ]   &	\begin{tikzpicture} [baseline={([yshift=-0.5ex] current bounding box.center)}, scale=2]

\node[draw=none,minimum size=4cm,regular polygon,regular polygon sides=1] (a) {};

\foreach \x in {1}
  \fill (a.corner \x) circle[radius=1pt];
\draw (a.corner 1) to [in=50,out=130,loop] (a.corner 1);
\draw (a.corner 1) to [in=50,out=130,distance = 0.5cm,loop] (a.corner 1);


\end{tikzpicture}
\end{tikzcd}
\caption{The derived graph $X(\alpha)$ (left) and a bouquet graph $X$ with two loops (right).}
\label{fig:graph_mapping}
\end{figure}

\noindent Moreover, we obtain the Hasse diagram of $\ul{\scH_{G}}$ and the intermediate graphs of $Y/X$ as in Figure \ref{fig:Hasse H}.
\begin{figure}[h]
    \centering
    \begin{tabular}{c@{\hspace{3cm}}c}
      \begin{minipage}{0.25\linewidth}
\begin{tikzcd}[scale=2]
&  & \emptyset \arrow[ld, no head] \arrow[d, no head] \arrow[rd, no head] & \\ 
& H_{1} \arrow[ld, no head] \arrow[rd, no head] & H_{2} \arrow[ld, no head] \arrow[d, no head] \arrow[d, no head] & H_{3} \arrow[ld, no head] \arrow[d, no head]  \\
H_{5} \arrow[rrd, no head] & H_{6} \arrow[rd, no head] & H_{4} \arrow[d, no head] & H_{7} \arrow[ld, no head] \\
& & G &
\end{tikzcd}
\end{minipage} &
\begin{minipage}{0.5\linewidth}
  \begin{tikzcd}[sep=1.5em, font=\small,scale cd=0.7,nodes in empty cells]
 {}  & {}   &  \begin{tikzpicture}[baseline={([yshift=-0.5ex] current bounding box.center)}]

\node[draw=none,minimum size=2cm,regular polygon,regular polygon sides=12] (a) {};

\foreach \x in {1,2,...,12}
  \fill (a.corner \x) circle[radius=1pt];
  
\foreach \y\z in {1/3,2/4,3/5,4/6,5/7,6/8,7/9,8/10,9/11,10/12,11/1,12/2}
  \path (a.corner \y) edge (a.corner \z);
  
\foreach \y\z in {1/2,3/4,5/6,7/8,9/10,11/12}
  \path (a.corner \y) edge (a.corner \z); 

\path (a.corner 1) edge [bend right=50] (a.corner 2);
\path (a.corner 3) edge [bend right=50] (a.corner 4);
\path (a.corner 5) edge [bend right=50] (a.corner 6);
\path (a.corner 7) edge [bend right=50] (a.corner 8);
\path (a.corner 9) edge [bend right=50] (a.corner 10);
\path (a.corner 11) edge [bend right=50] (a.corner 12);  
   
\end{tikzpicture} \arrow[d]  \arrow[ld] \arrow[rd]  & {}    \\  
   & \begin{tikzpicture}

\node[draw=none,minimum size=1.5cm,regular polygon,regular polygon sides=6] (a) {};

  \foreach \x in {1,2,...,6}
  \fill (a.corner \x) circle[radius=1pt];
  
 \foreach \y\z in {1/2,2/3,3/4,4/5,5/6,6/1}
  \path (a.corner \y) edge (a.corner \z);
  
  \draw (a.corner 1) to [distance = 0.7cm,loop] (a.corner 1);
  \draw (a.corner 2) to [distance = 0.7cm,loop] (a.corner 2);
  \draw (a.corner 3) to [distance = 0.7cm,loop] (a.corner 3);
  \draw (a.corner 4) to [distance = 0.7cm,loop] (a.corner 4);
  \draw (a.corner 5) to [distance = 0.7cm,loop] (a.corner 5);
  \draw (a.corner 6) to [distance = 0.7cm,loop] (a.corner 6);

\end{tikzpicture}\arrow[ld]\arrow[rd]   & \begin{tikzpicture} 
\node[draw=none,minimum size=2cm,regular polygon,regular polygon sides=6] (a) {};

\foreach \x in {1,2,...,6}
  \fill (a.corner \x) circle[radius=1pt];

\foreach \y\z in {1/2,1/5,1/3,2/4,3/4,4/5,5/6,3/6,2/6}
  \path (a.corner \y) edge (a.corner \z);
 
\path (a.corner 1) edge [bend right=50] (a.corner 2);
\path (a.corner 3) edge [bend right=50] (a.corner 4);
\path (a.corner 5) edge [bend right=50] (a.corner 6);

\end{tikzpicture}\arrow[ld]\arrow[d]  & \begin{tikzpicture} 
\node[draw=none,minimum size=2cm,regular polygon,regular polygon sides=6] (a) {};

\foreach \x in {1,2,...,6}
  \fill (a.corner \x) circle[radius=1pt];

\foreach \y\z in {1/3,1/5,2/4,2/6,3/5,4/6,1/2,3/4,5/6}
  \path (a.corner \y) edge (a.corner \z);
 
\path (a.corner 1) edge [bend right=50] (a.corner 2);
\path (a.corner 3) edge [bend right=50] (a.corner 4);
\path (a.corner 5) edge [bend right=50] (a.corner 6);

\end{tikzpicture}\arrow[ld]\arrow[d]   \\  
 \begin{tikzpicture}
 
\node[draw=none,minimum size=1.5cm,regular polygon,rotate = -45,regular polygon sides=4] (a) {};

  \fill (a.corner 1) circle[radius=0.7pt];
  \fill (a.corner 3) circle[radius=0.7pt];
  
  \path (a.corner 1) edge [bend left=20] (a.corner 3);
  \path (a.corner 1) edge [bend right=20] (a.corner 3);

  \draw (a.corner 3) to [in=150,out=210, distance = 0.7cm,loop] (a.corner 3);
  \draw (a.corner 1) to [in=30,out=330, distance = 0.7cm,loop] (a.corner 1);

\end{tikzpicture}\arrow[rrd]  &  \begin{tikzpicture}

\node[draw=none,minimum size=2cm,regular polygon,rotate = -45,regular polygon sides=4] (a) {};

  \fill (a.corner 1) circle[radius=0.7pt];
  \fill (a.corner 3) circle[radius=0.7pt];
  
  \path (a.corner 1) edge [bend left=20] (a.corner 3);
  \path (a.corner 1) edge [bend left=60] (a.corner 3);
  \path (a.corner 1) edge [bend right=20] (a.corner 3);
  \path (a.corner 1) edge [bend right=60] (a.corner 3);

\end{tikzpicture}\arrow[rd]  & \begin{tikzpicture}

\node[draw=none,minimum size=2cm,regular polygon,regular polygon sides=3] (a) {};

\foreach \x in {1,2,...,3}
  \fill (a.corner \x) circle[radius=1pt];
  
\foreach \y\z in {1/2,2/3,3/1}
  \path (a.corner \y) edge (a.corner \z);
  \draw (a.corner 1) to [in=50,out=130, distance = 0.7cm,loop] (a.corner 1);
  \draw (a.corner 2) to [in=170,out=250, distance = 0.7cm,loop] (a.corner 2);
  \draw (a.corner 3) to [in=10,out=290, distance = 0.7cm,loop] (a.corner 3);
 
\end{tikzpicture}\arrow[d]   & \begin{tikzpicture}

\node[draw=none,minimum size=1.5cm,regular polygon,rotate = -45,regular polygon sides=4] (a) {};

  \fill (a.corner 1) circle[radius=0.7pt];
  \fill (a.corner 3) circle[radius=0.7pt];
  
  \path (a.corner 1) edge [bend left=20] (a.corner 3);
  \path (a.corner 1) edge [bend right=20] (a.corner 3);

  \draw (a.corner 3) to [in=150,out=210, distance = 0.7cm,loop] (a.corner 3);
  \draw (a.corner 1) to [in=30,out=330, distance = 0.7cm,loop] (a.corner 1);

\end{tikzpicture}\arrow[ld]    \\ 
 {}  &  {}  &  \begin{tikzpicture} [baseline={([yshift=-0.6ex] current bounding box.center)}]
  
\node[draw=none,minimum size=2cm,regular polygon,regular polygon sides=1] (a) {};

\foreach \x in {1}
  \fill (a.corner \x) circle[radius=0.7pt];
\draw (a.corner 1) to [in=50,out=130,loop] (a.corner 1);
\draw (a.corner 1) to [in=50,out=130,distance = 0.5cm,loop] (a.corner 1);
\end{tikzpicture} &  {}     	
\end{tikzcd}
\end{minipage}
\end{tabular}
\caption{The Hasse diagram of $\ul{\scH_{G}}$ (left) and the intermediate graphs of $Y/X$ (right).}\label{fig:Hasse H}

\end{figure}

Using SageMath \cite{Sage}, we calculate 
$$ \kappa(X_{1})=6,\ \kappa(X_{2})=300,\ \kappa(X_{3})=294,\ \kappa(X_4)=3.$$
By the spanning tree formula above, we obtain 
$$\kappa(Y) = 2\cdot\frac{\kappa(X_{1}) \kappa(X_{2})\kappa(X_{3})}{{\kappa(X_{4})}^2} =117600. $$	
\end{eg}

\subsection{The Brauer--Kuroda relations for graphs }\label{bkru}
In this section, we prove \cref{main2} and consider several examples.

\begin{lem}\label{inter rel}
	Let $Y/X$ be a Galois covering with $\Gal(Y/X)=G$, and let $H$ be a subgroup of $G$. Let $a_{\rho,H}$ be the integers satisfying $\displaystyle{\chi_{\Ind^{G}_{H}(\rho_{0})}=\chi_{\rho_{0}}+\sum_{\rho\in \Irr(G)\setminus\{\rho_0\}} a_{\rho,H}\chi_{\rho}}$, then 
\[
[G:H]\cdot\kappa(X_H)=\kappa(X)\cdot\prod_{\rho\in \Irr(G)\setminus\{\rho_0\}} h_{Y/X}(1,\rho)^{a_{\rho,H}}.
\]
\end{lem}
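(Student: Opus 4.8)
The plan is to recast the asserted identity as an equality of Artin--Ihara $L$-functions attached to the induced representation $\Ind^{G}_{H}(\rho_{0})$, distill it into a polynomial identity among the $h$-factors, and then specialize at $u=1$ via Hashimoto's formula $h'_{Z}(1)=-2\chi(Z)\kappa(Z)$. The integers $a_{\rho,H}$ are precisely the multiplicities in the decomposition $\Ind^{G}_{H}(\rho_{0})\cong \rho_{0}\oplus\bigoplus_{\rho\neq\rho_{0}}\rho^{\oplus a_{\rho,H}}$, where the trivial representation occurs exactly once by Frobenius reciprocity ($\langle\Ind^{G}_{H}\rho_{0},\rho_{0}\rangle_{G}=\langle\rho_{0},\rho_{0}\rangle_{H}=1$), in agreement with the given character relation; note in particular that each $a_{\rho,H}\geq 0$, so all the products below are genuine polynomials.

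First I would set up the $L$-function identity. Since $Y/X_{H}$ is a Galois cover with group $H$, the trivial representation gives $L_{Y/X_{H}}(u,\rho_{0})=\zeta_{X_{H}}(u)$, and likewise $L_{Y/X}(u,\rho_{0})=\zeta_{X}(u)$. By Proposition~\ref{artinL pro}\,(3), $L_{Y/X}(u,\Ind^{G}_{H}\rho_{0})=L_{Y/X_{H}}(u,\rho_{0})=\zeta_{X_{H}}(u)$, and applying Proposition~\ref{artinL pro}\,(1) to the decomposition above yields
\[
\zeta_{X_{H}}(u)=\zeta_{X}(u)\prod_{\rho\in\Irr(G)\setminus\{\rho_{0}\}}L_{Y/X}(u,\rho)^{a_{\rho,H}}.
\]
Substituting the three-term formulas $\zeta_{Z}(u)=(1-u^{2})^{\chi(Z)}/h_{Z}(u)$ and $L_{Y/X}(u,\rho)=(1-u^{2})^{\chi(X)\deg\rho}/h_{Y/X}(u,\rho)$, the powers of $(1-u^{2})$ match on both sides: on the left the exponent is $\chi(X_{H})=[G:H]\chi(X)$ (Euler characteristic is multiplicative in covers, since $|V_{X_{H}}|=[G:H]|V_{X}|$ and $|E_{X_{H}}|=[G:H]|E_{X}|$), and on the right it is $\chi(X)\bigl(1+\sum_{\rho}a_{\rho,H}\deg\rho\bigr)=\chi(X)\cdot[G:H]$, because $\deg\Ind^{G}_{H}\rho_{0}=[G:H]$. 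Cancelling these factors leaves the polynomial identity
\[
h_{X_{H}}(u)=h_{X}(u)\prod_{\rho\in\Irr(G)\setminus\{\rho_{0}\}}h_{Y/X}(u,\rho)^{a_{\rho,H}}.
\]

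The remaining step is to evaluate this at $u=1$, which is the crux since $h_{X_{H}}$ and $h_{X}$ both vanish there ($h_{Z}(1)=\det(D_{Z}-A_{Z})=0$ for connected $Z$). The key auxiliary fact is that $h_{Y/X}(1,\rho)\neq 0$ for every $\rho\neq\rho_{0}$: Proposition~\ref{formula} gives $\prod_{\rho\neq\rho_{0}}h_{Y/X}(1,\rho)^{\deg\rho}=|G|\kappa(Y)/\kappa(X)$, a finite nonzero number, so no finite factor can vanish. Hence the product on the right contributes nothing to the order of vanishing at $u=1$, and since $\chi(X)\neq 0$ forces $h_{X}$ to have a simple zero there ($h'_{X}(1)=-2\chi(X)\kappa(X)\neq 0$, using $\kappa(X)\geq 1$), and likewise $h_{X_{H}}$ (with $\chi(X_{H})=[G:H]\chi(X)\neq 0$), I may divide the polynomial identity by $(u-1)$ and let $u\to 1$. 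Hashimoto's formula then yields $-2\chi(X_{H})\kappa(X_{H})=-2\chi(X)\kappa(X)\prod_{\rho}h_{Y/X}(1,\rho)^{a_{\rho,H}}$, and cancelling $-2\chi(X)$ together with $\chi(X_{H})=[G:H]\chi(X)$ gives exactly the claimed formula.

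I expect the genuine obstacle to be the degenerate case $\chi(X)=0$, which the statement does not exclude. There Hashimoto's formula only gives $h'_{Z}(1)=0$, so the zeros of $h_{X}$ and $h_{X_{H}}$ at $u=1$ are no longer simple and the leading-coefficient comparison above must be replaced. In this situation $G$ is necessarily cyclic and $\kappa(Y)=|G|\kappa(X)$ (Remark 2.7 of \cite{ray}), so I would dispose of it separately by a direct computation in the cyclic setting, reducing the right-hand product to the cyclotomic identity $\prod_{k=1}^{d-1}(1-\zeta_{d}^{k})=d$ (with $d=[G:H]$) rather than invoking Hashimoto's formula; this parallels the separate treatment of $\chi(X)=0$ in the proof of \cref{main1}.
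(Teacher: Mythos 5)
Your argument for the case $\chi(X)\neq 0$ is correct and is essentially identical to the paper's proof: both derive $\zeta_{X_H}(u)=L_{Y/X}(u,\Ind^G_H(\rho_0))=\zeta_X(u)\prod_{\rho\neq\rho_0}L_{Y/X}(u,\rho)^{a_{\rho,H}}$ from \cref{artinL pro}~(3) and (1), cancel the powers of $(1-u^2)$ using $\deg\Ind^G_H(\rho_0)=[G:H]$ and $\chi(X_H)=[G:H]\chi(X)$ to obtain $h_{X_H}(u)=h_X(u)\prod_{\rho\neq\rho_0}h_{Y/X}(u,\rho)^{a_{\rho,H}}$, and then evaluate at $u=1$ via Hashimoto's formula. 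One simplification: your auxiliary checks (that each $h_{Y/X}(1,\rho)\neq 0$ and that $h_X$, $h_{X_H}$ have simple zeros at $u=1$) are not needed; differentiating the product identity and using only $h_X(1)=\det(D_X-A_X)=0$ already yields $h'_{X_H}(1)=h'_X(1)\cdot\prod_{\rho\neq\rho_0}h_{Y/X}(1,\rho)^{a_{\rho,H}}$, which is exactly how the paper concludes.

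Where you genuinely depart from the paper is the case $\chi(X)=0$, and your instinct there is sound: the statement does not exclude it, and the differentiation argument then degenerates to $0=0$, since one must cancel the factor $-2\chi(X)$. The paper's own proof is silent on this point (it implicitly assumes $\chi(X)\neq 0$); this causes no harm downstream, because \cref{inter rel} is only invoked in the proof of \cref{main2} after the case $\chi(X)=0$ has been disposed of, but your observation is a real refinement of the written proof. Be aware, however, that your proposed repair is incomplete as sketched: \cref{arcyclic} applies only to bouquet graphs, whereas a connected graph with $\chi(X)=0$ is in general a cycle with pendant trees attached. To run the cyclotomic computation one needs an additional reduction, for instance the fact that Ihara zeta and Artin--Ihara $L$-functions (hence the $h$'s) are unchanged by deleting pendant trees, since non-backtracking closed paths never enter them; combined with \cref{artinL pro}~(2), this reduces everything to a cyclic cover of a cycle graph. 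Note also that the identity you then need is $\prod_{k=1}^{d-1}\bigl(2-\zeta_d^k-\zeta_d^{-k}\bigr)=d^2$ with $d=[G:H]$, i.e.\ the squared form of the identity you quote: since $\kappa(X_H)=[G:H]\kappa(X)$ when $\chi(X)=0$, the left-hand side of the lemma equals $[G:H]^2\kappa(X)$, so the product of the $h$-values must come out to $[G:H]^2$, not $[G:H]$.
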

	
\begin{proof}
We have 
\begin{align}
	\zeta_{X_{H}}(u) & \overset{\phantom{\ref{artinL pro}(3)}}{=} L_{Y/X_{H}}(u,\rho_{0})\\ & \overset{\text{\ref{artinL pro}(3)}}{=} L_{Y/X}(u,\Ind^{G}_{H}(\rho_0))\\ &\overset{\text{\ref{artinL pro}(1)}}{=}\zeta_{X}(u)\cdot \prod_{\rho\in \Irr(G)\setminus\{\rho_{0}\}} L_{Y/X}(u,\rho)^{a_{\rho,H}}. \end{align} 
Since $\deg(\Ind_{H}^{G}(\rho_{0}))=[G:H]$ and $\chi(X_{H})=[G:H]\cdot\chi(X)$, we have $$ h_{X_{H}}(u)=h_{X}(u)\cdot\prod_{\rho\in \Irr(G)\setminus\{\rho_{0}\}} h_{Y/X}(u,\rho)^{a_{\rho,H}}.$$
By differentiating both sides of this equation and substituting $u=1$, we have the claimed equation. 
\end{proof}


\begin{dfn}\label{dfn:cg}
For a finite group $G$, we define 
\[
\scC_{G} \coloneq \{C  \mid C \text{ is a cyclic subgroup of } G\}.
\]
Then the set $\scC_{G}$ becomes a partially ordered set by the inclusion relation. We also define a partially ordered set 
\[
\ol{\scC_{G}} \coloneq \scC_{G}\cup \{\infty\},
\]
where we define $C<\infty$ for any $C\in \scC_{G}$.
\end{dfn}


\begin{thm}[Artin's induction theorem, cf.~{\cite[Theorem 2.1.3]{Brauer}}]\label{artinn}
 Let $\chi$ be a rational-valued character of $G$. For every cyclic subgroup $C$ of $G$, let 
\[
a_{\chi}(C)\coloneq \frac{1}{[G:C]}\sum_{\substack{B\in \scC_{G}; \\ C\subseteq B}}\mu([B:C])\chi(g_B),
\] 
where $g_{B}$ is a generator of $B$ and $\mu$ is the classical M\"{o}bius function. Then we have $$\chi=\sum_{\substack{C\in \scC_{G}}}a_{\chi}(C)\chi_{\Ind^{G}_{C}(\rho_0)}. $$ 
\end{thm}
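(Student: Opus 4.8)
The plan is to verify the identity as an equality of class functions by evaluating both sides at a generator of each cyclic subgroup and then collapsing the resulting alternating sum by Möbius inversion over a divisor lattice.

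First I would observe that both sides are rational-valued class functions: $\chi$ is rational-valued by hypothesis, while $\chi_{\Ind^{G}_{C}(\rho_{0})}$ is the permutation character of $G$ acting on $G/C$, hence integer-valued, and the coefficients $a_{\chi}(C)$ lie in $\bbQ$. A rational-valued character takes the same value on all generators of a given cyclic subgroup, so the value of either side at an element $h$ equals its value at a chosen generator of $\langle h\rangle$. It therefore suffices to prove the identity at a fixed generator $g=g_D$ of each cyclic subgroup $D\in\scC_{G}$. For this I would use the Frobenius formula $\chi_{\Ind^{G}_{C}(\rho_{0})}(g)=\#\{xC\in G/C : x^{-1}gx\in C\}$ and rewrite it, by grouping the $x$ according to the value $x^{-1}gx$, as
\[
\chi_{\Ind^{G}_{C}(\rho_{0})}(g)=\frac{|C_{G}(g)|}{|C|}\cdot\#\{c\in C : c\sim_{G} g\},
\]
where $C_{G}(g)$ is the centralizer of $g$ and $\sim_{G}$ denotes $G$-conjugacy.

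Next I would substitute the definition of $a_{\chi}(C)$ and this formula into the right-hand side. The prefactors combine as $\frac{1}{[G:C]}\cdot\frac{|C_{G}(g)|}{|C|}=\frac{|C_{G}(g)|}{|G|}$, which is independent of $C$, so after interchanging the order of summation the right-hand side becomes
\[
\frac{|C_{G}(g)|}{|G|}\sum_{B\in\scC_{G}}\chi(g_B)\sum_{\substack{C\in\scC_{G}\\ C\subseteq B}}\mu([B:C])\,\#\{c\in C : c\sim_{G} g\}.
\]
For a fixed cyclic $B$ the subgroups $C\subseteq B$ are exactly the subgroups of a cyclic group, so the interval $[C,B]$ is the divisor lattice of $[B:C]$ and $\mu([B:C])$ is the corresponding Möbius value. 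Writing $\#\{c\in C:c\sim_{G} g\}$ as the summatory function, over the subgroups $C'\subseteq C$, of the number of generators of $C'$ conjugate to $g$, classical Möbius inversion collapses the inner sum to a single term: the number of generators of $B$ itself that are conjugate to $g$.

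Finally I would use rationality together with the class-function property to finish: a generator of $B$ conjugate to $g$ forces $B\sim_{G}D$, and then $\chi(g_B)=\chi(g)$ (the value at any generator equals $\chi(g)$ by rationality, and the conjugate generator has the same value since $\chi$ is a class function). Hence the outer sum factors as $\chi(g)$ times $\sum_{B}\#\{\text{generators of }B\text{ conjugate to }g\}=\#\{c\in G:c\sim_{G} g\}=|G|/|C_{G}(g)|$, and the two normalizing factors cancel to give exactly $\chi(g)$. The main obstacle is the bookkeeping in the middle step: one must correctly track the centralizer factor and the count of generators conjugate to $g$, and recognize the alternating sum as Möbius inversion over the divisor lattice of each cyclic $B$. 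The delicate point is that rationality of $\chi$ is precisely what guarantees $\chi(g_B)=\chi(g)$ in the final cancellation, so the hypothesis cannot be dropped.
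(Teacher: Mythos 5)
Your proof is correct, but note that there is no proof in the paper to compare against: the paper quotes this statement from the literature (Brauer, Theorem 2.1.3) and uses it as a black box, so what you have supplied is a self-contained proof, and it is essentially the classical argument for Artin's induction theorem in its explicit (M\"obius-coefficient) form. The key steps all check out: the Frobenius fixed-point formula gives $\chi_{\Ind^{G}_{C}(\rho_{0})}(g)=\frac{|C_{G}(g)|}{|C|}\,\#\{c\in C: c\sim_{G}g\}$; the prefactor $\frac{1}{[G:C]}\cdot\frac{|C_{G}(g)|}{|C|}=\frac{|C_{G}(g)|}{|G|}$ is indeed independent of $C$, so the order of summation may be exchanged; for fixed cyclic $B$, writing $\#\{c\in C:c\sim_{G}g\}$ as a sum over subgroups $C'\subseteq C$ of the number of generators of $C'$ conjugate to $g$ and inverting over the divisor lattice (this is exactly Lemma \ref{grp lat} of the paper) collapses the inner sum to the number of generators of $B$ conjugate to $g$; and summing these counts over all $B$ recovers the conjugacy class size $[G:C_{G}(g)]$, which cancels the normalization to leave $\chi(g)$. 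Rationality of $\chi$ enters exactly where you flag it, in the identity $\chi(g_{B})=\chi(g)$ for any $B$ possessing some generator conjugate to $g$, and it is genuinely indispensable: the right-hand side is automatically rational-valued because permutation characters are integer-valued, so the expansion cannot hold for a non-rational-valued $\chi$. One small simplification: your opening reduction to the chosen generators $g_{D}$ is not actually needed, since your computation is valid verbatim at an arbitrary element $g\in G$.
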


\begin{cor}\label{triv}
	In the above notation, we have $$\chi_{\rho_{0}}=\sum_{C\in \scC_{G}}-\frac{1}{[G:C]}\mu(C,\infty)\chi_{\Ind_{C}^{G}(\rho_{0})} . $$
\end{cor}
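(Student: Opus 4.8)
The plan is to apply Artin's induction theorem (\cref{artinn}) to the trivial character $\chi=\chi_{\rho_{0}}$ and then to reinterpret the resulting coefficients $a_{\chi_{\rho_{0}}}(C)$ in terms of the M\"{o}bius function of the poset $\ol{\scC_{G}}$. Since $\rho_{0}$ is the trivial representation, $\chi_{\rho_{0}}(g_{B})=1$ for every cyclic subgroup $B$ and any generator $g_{B}$, so the defining formula in \cref{artinn} collapses to $a_{\chi_{\rho_{0}}}(C)=\frac{1}{[G:C]}\sum_{B\in\scC_{G},\,C\subseteq B}\mu([B:C])$, where $\mu$ on the right is the classical M\"{o}bius function. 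Thus the corollary is equivalent to the purely order-theoretic identity $\sum_{B\in\scC_{G},\,C\subseteq B}\mu([B:C])=-\mu(C,\infty)$, where $\mu(C,\infty)$ is now the M\"{o}bius function of $\ol{\scC_{G}}$.

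The crucial step is to identify the classical M\"{o}bius function of the index with the M\"{o}bius function of the poset $\scC_{G}$. For cyclic subgroups $C\subseteq B$, I would observe that, because $B$ is cyclic, every subgroup of $B$ is cyclic; consequently the interval $[C,B]$ inside $\scC_{G}$ coincides with the full interval $[C,B]$ in the subgroup lattice of $B$, which is isomorphic as a poset to the divisor lattice of the integer $[B:C]$ via $W\mapsto[W:C]$. Since the M\"{o}bius function of an interval depends only on the interval viewed as an abstract poset, and since the M\"{o}bius function of the divisor lattice of $n$ from bottom to top is the classical $\mu(n)$, this yields $\mu_{\scC_{G}}(C,B)=\mu([B:C])$ for all $C\subseteq B$ in $\scC_{G}$.

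With this translation in hand, the remainder is the defining recursion of the M\"{o}bius function of $\ol{\scC_{G}}$ at its top element. Applying the relation $\sum_{C\le Z\le\infty}\mu(C,Z)=0$ (valid since $C<\infty$) and separating off the term $Z=\infty$ gives $\mu(C,\infty)=-\sum_{Z\in\scC_{G},\,C\subseteq Z}\mu(C,Z)$; noting that $\mu(C,Z)$ for $Z\in\scC_{G}$ is the same whether computed in $\scC_{G}$ or in $\ol{\scC_{G}}$, and substituting the identification of the previous paragraph, I obtain exactly $-\mu(C,\infty)=\sum_{B\in\scC_{G},\,C\subseteq B}\mu([B:C])$. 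Combining this with the collapsed Artin formula gives $a_{\chi_{\rho_{0}}}(C)=-\frac{1}{[G:C]}\mu(C,\infty)$, and plugging back into \cref{artinn} yields the stated expression for $\chi_{\rho_{0}}$. I expect the only genuine subtlety to be the identification in the second paragraph, namely justifying that the interval $[C,B]$ in the poset of cyclic subgroups is the divisor lattice of $[B:C]$, which rests on the fact that subgroups of a cyclic group are cyclic; everything else is a direct bookkeeping of the two M\"{o}bius recursions.
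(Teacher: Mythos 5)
Your proposal is correct and takes essentially the same route as the paper: specialize Artin's induction theorem (\cref{artinn}) to the trivial character, identify $\sum_{B\supseteq C}\mu([B:C])$ with $-\mu(C,\infty)$ via the poset M\"{o}bius function, and substitute back. The interval identification you spell out (subgroups of a cyclic $B$ are cyclic, so $[C,B]$ in $\scC_{G}$ is the divisor lattice of $[B:C]$) is exactly the content the paper delegates to Lemma~\ref{grp lat}, together with the defining recursion of $\mu$ at the top element $\infty$.
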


\begin{proof}

In \cref{artinn}, setting $\chi$ to be the trivial character $\chi_{\rho_{0}}$ of $G$, we get

$$\chi_{\rho_{0}}=\sum_{C \in \scC_{G}} a(C) \chi_{\Ind^{G}_{C}(\rho_0)},$$
where $a(C)$ is given by

$$a(C) = \frac{1}{[G:C]} \sum_{\substack{B\in \scC_{G}; \\ C \subseteq B}} \mu([B : C]).$$
By Lemma \ref{grp lat}, we can simplify the sum over $C \subseteq B$ to obtain

$$a(C) = \frac{1}{[G:C]} \sum_{\substack{B\in \scC_{G}; \\ C \subseteq B}} \mu(C, B) = -\frac{1}{[G:C]} \mu(C,\infty).$$
Therefore, we conclude that

$$\chi_{\rho_{0}} = \sum_{C \in \scC} -\frac{1}{[G:C]} \mu(C, \infty) \chi_{\Ind^{G}_{C}(\rho_{0})}.$$

\end{proof}

\begin{thm}\label{main2}
Let $G$ be a finite group, and let $\ol{\scC_{G}}$ be the partially ordered set defined in Definition \ref{dfn:cg}. Let $\mu\colon \ol{\scC_{G}}\times\ol{\scC_{G}}\to \bbZ$ be the M\"{o}bius function of $\ol{\scC_{G}}$ (cf.~Appendix \ref{mobi fu}). Then, for any Galois cover $Y/X$ with $\Gal(Y/X)=G$, we have   
\[
\kappa(X)=\prod_{ C\in \scC_{G}}([G:C]\cdot \kappa(X_C))^{-\frac{1}{[G:C]} \mu(C,\infty)},
\] 
where $X_C$ denotes the intermediate graph corresponding to $C$.
\end{thm}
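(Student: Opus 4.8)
The plan is to follow the template of the proof of \cref{main1}: expand the right-hand side using \cref{inter rel}, and then read off the needed exponents from the character identity in \cref{triv}. First I would dispose of the degenerate case $\chi(X)=0$. By Remark 2.7 in \cite{ray} this forces $G$ to be cyclic, so $\scC_{G}$ is the whole subgroup lattice and has top element $G$. Then the defining recursion of the M\"obius function of $\ol{\scC_{G}}$ (cf.~Appendix \ref{mobi fu}) gives, for each $C\in\scC_{G}$,
\begin{equation*}
\mu(C,\infty)=-\sum_{\substack{D\in\scC_{G};\\ C\subseteq D}}\mu(C,D)=-\delta_{C,G},
\end{equation*}
since $\{D\in\scC_{G}\mid D\supseteq C\}$ is the interval $[C,G]$ and the M\"obius sum over an interval vanishes unless $C=G$. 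Hence only the factor $C=G$ survives, and because $X_{G}=X$ and $[G:G]=1$, the right-hand side collapses to $\kappa(X)^{1}=\kappa(X)$. (This incidentally shows the formula is trivially true for every cyclic $G$.)

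From now on I assume $\chi(X)\neq 0$, which is exactly the situation in which \cref{inter rel} carries information. For each cyclic subgroup $C$ that lemma yields
\begin{equation*}
[G:C]\cdot\kappa(X_{C})=\kappa(X)\cdot\prod_{\rho\in\Irr(G)\setminus\{\rho_{0}\}}h_{Y/X}(1,\rho)^{a_{\rho,C}},
\end{equation*}
where the integers $a_{\rho,C}$ are defined by $\chi_{\Ind^{G}_{C}(\rho_{0})}=\chi_{\rho_{0}}+\sum_{\rho\neq\rho_{0}}a_{\rho,C}\chi_{\rho}$. Substituting this into the right-hand side of the theorem and collecting exponents, the product becomes $\kappa(X)^{e_{0}}\prod_{\rho\neq\rho_{0}}h_{Y/X}(1,\rho)^{e_{\rho}}$ with $e_{0}=\sum_{C}-\tfrac{1}{[G:C]}\mu(C,\infty)$ and $e_{\rho}=\sum_{C}-\tfrac{1}{[G:C]}\mu(C,\infty)\,a_{\rho,C}$. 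So the whole theorem reduces to the two scalar claims $e_{0}=1$ and $e_{\rho}=0$ for all $\rho\neq\rho_{0}$.

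Both claims fall out of \cref{triv}. Substituting $\chi_{\Ind^{G}_{C}(\rho_{0})}=\chi_{\rho_{0}}+\sum_{\rho\neq\rho_{0}}a_{\rho,C}\chi_{\rho}$ into the identity $\chi_{\rho_{0}}=\sum_{C}-\tfrac{1}{[G:C]}\mu(C,\infty)\chi_{\Ind^{G}_{C}(\rho_{0})}$ and comparing coefficients in the basis of irreducible characters, the coefficient of $\chi_{\rho_{0}}$ gives $e_{0}=1$ and the coefficient of each $\chi_{\rho}$ (for $\rho\neq\rho_{0}$) gives $e_{\rho}=0$. Feeding these back in leaves exactly $\kappa(X)$.

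The step needing genuine care, and the main obstacle, is that the exponents $-\tfrac{1}{[G:C]}\mu(C,\infty)$ are only rational while the individual factors $h_{Y/X}(1,\rho)$ are in general nonreal, so splitting a fractional power of a product of $h$'s into a product of fractional powers is not automatically legitimate. To make the rearrangement rigorous I would first raise both sides to the power $M=\operatorname{lcm}_{C}[G:C]$, turning every exponent $-\tfrac{M}{[G:C]}\mu(C,\infty)$ and $-\tfrac{M}{[G:C]}\mu(C,\infty)a_{\rho,C}$ into an integer; the regrouping of the $h_{Y/X}(1,\rho)$-factors is then an honest identity of nonzero complex numbers, and the two coefficient identities (scaled by $M$) give $\mathrm{RHS}^{M}=\kappa(X)^{M}$. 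Since both sides are positive reals, taking positive $M$-th roots completes the proof.
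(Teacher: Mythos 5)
Your proof is correct and follows essentially the same route as the paper's: dispose of the $\chi(X)=0$ case via cyclicity of $G$, expand the right-hand side with \cref{inter rel}, and extract the two exponent identities $e_{0}=1$, $e_{\rho}=0$ from \cref{triv} by comparing coefficients of irreducible characters (these are exactly the paper's equations \eqref{keylem}). Your additional step of raising to the integer power $M$ to justify regrouping rational powers is a small refinement the paper omits (the paper implicitly treats the factors $h_{Y/X}(1,\rho)$ as positive reals, which is true but unstated), not a different approach.
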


\begin{proof}

If $\chi(X)= 0$, then $\Gal(Y/X)$ is cyclic (see Remark 2.7 in \cite{ray}). Then $\scC_{G} $ contains $G$, and hence, we have $\mu(C,\infty) = 0$ for any $C \in \scC_{G}\setminus\{G\}$. In this case, the theorem is true. From now on, we assume $\chi(X)\neq 0$.

Let $a_{\rho,C}$ be the integers satisfying $\displaystyle{\chi_{\Ind^{G}_{H}(\rho_{0})}=\chi_{\rho_{0}}+\sum_{\rho\in \Irr(G)\setminus\{\rho_0\}} a_{\rho,C}\chi_{\rho}}$ for each $C\in \scC_{G}$. By Corollary \ref{triv}, we have \begin{align}
	\chi_{\rho_0} &\overset{\text{\ref{inter rel}}}{=} \sum_{C\in \scC_{G}}\left(-\frac{1}{[G:C]}\mu(C,\infty)\cdot\left(\chi_{\rho_{0}}+\sum_{\rho\in \Irr(G)\setminus\{\rho_0\} } a_{\rho,C}\chi_{\rho}\right)\right) \\ &= \left(\sum_{C\in \scC_{G}}-\frac{1}{[G:C]}\mu(C,\infty)\right)\chi_{\rho_0}+ \sum_{\rho\in \Irr(G)\setminus\{\rho_{0}\}}\left(-\sum_{C\in \scC_{G}} \frac{1}{[G:C]} \mu(C,\infty) a_{\rho,C}\right)\chi_{\rho}. 
\end{align}
Hence we obtain 
 \begin{equation}
 	\text{$-\sum_{ C\in \scC_{G}}\frac{1}{[G:C]}\mu(C,\infty)=1$\qquad and\qquad $\sum_{C\in \scC_{G}}\frac{1}{[G:C]} \mu(C,\infty)a_{\rho,C}=0$} \tag{3}\label{keylem}
 \end{equation}
for any $\rho\in \Irr(G)\setminus\{\rho_0\} $.
Then we have 
\begin{align}
	\prod_{C\in \scC_{G}}([G:C] \cdot \kappa(X_C)&)^{-\frac{1}{[G:C]}\mu(C,\infty)} \\& \overset{\text{\ref{inter rel}}}{=} \prod_{C\in \scC_{G}}\left(\kappa(X) \cdot \prod_{\rho\in \Irr(G)\setminus\{\rho_0 \}} h_{Y/X}(1,\rho)^{a_{\rho,C}}\right)^{-\frac{1}{[G:C]}\mu(C,\infty)} \\ &= \displaystyle{\kappa(X)^{-\sum\limits_{C\in \scC_{G}} \frac{1}{[G:C]} \mu(C,\infty)}\cdot\!\!\!\!\!\!\prod_{\rho\in \Irr(G)\setminus\{\rho_0\}}\hspace{-15pt} h_{Y/X}(1,\rho)^{-\sum\limits_{C\in \scC_{G}}\frac{1}{[G:C]} \mu(C,\infty) a_{\rho,C}}} \\ & \overset{\eqref{keylem}}{=}\kappa(X).
\end{align}
\end{proof}


\begin{rmk}\ \label{rmk:excrem}
\begin{enumerate}
	\item If $G$ is cyclic, then \cref{main2} implies the trivial equation $\kappa(X)=\kappa(X)$. Conversely, if $G$ is not cyclic, then \cref{main2} gives a non-trivial formula since $\kappa(X)$ does not appear on the right-hand side of the equation.
	\item In \cref{main2}, if $\mu(\{1_G\},\infty)=0$, then $\kappa(Y)$ does not appear on the right-hand side of the equation. For example, cyclic groups satisfy the condition $\mu(\{1_G\},\infty) = 0$.  In this case, \cref{main2} does not give a formula expressing $\kappa(Y)$. Thus, \cref{main2} is not useful to represent $\kappa(Y)$ by the number of spanning trees of intermediate graphs. However, as we will see in Example \ref{eg:q8}, \cref{main2} may give a non-trivial formula even in this case.

In \cite{excep}, a group $G$ with the condition $\mu(\{1_{G}\},\infty)=0$ is called \tit{exceptional}. In Table \ref{figure}, we give the list of groups with order at most $24$, and we indicate whether they are exceptional or not. We computed Table \ref{figure} using data from \cite{GroupNames}.
\end{enumerate}
\end{rmk}


\begin{rmk}
Let $G$ be a finite group. A relation of the form 
\[
\sum_{H} n_{H} \chi_{\Ind_{H}^{G}(\rho_0)} = 0,
\]
is called a \tit{Brauer relation}. Artin's induction theorem (\cref{artinn}) is an example of a Brauer relation. Many Brauer relations are known, and Brauer relations for various groups are presented in \cite{BrauerRel}. If $G$ has a Brauer relation of the above form, by a similar argument to \cref{main2}, we can prove a spanning tree formula
\[
\prod_{H} ([G : H] \cdot \kappa(X_{H}))^{n_{H}} = 1.
\] 
\end{rmk}


\begin{eg}\label{bru val}
 Let $Y/X$ be a Galois cover with Galois group $G=(\bbZ/\bbZ)^m$. To see that \cref{main2} is a generalization of \cref{thm:HMSV}, we use induction on $m$. $G$ has $2^{m}-1$ cyclic subgroups $C_{1},\ldots,C_{2^{m}-1}$ of order $2$, and we have $$\ol{\scC_{G}}=\{\{1_G\},C_{1},\ldots,C_{2^{m}-1},\infty \}.$$ Moreover, we have $\mu(\{1_{G}\},\infty)=2^{m}-2$ and $\mu(C_{i},\infty)=-1$ for $i=1,\ldots,2^m-1$. Therefore, by \cref{main2}, we obtain $$
    \kappa(X) = (2^{m}\kappa(Y))^{-\frac{2^m-2}{2^m}} \left(\prod_{i=1}^{2^m-1}2^{m-1}\kappa(X_{C_{i}})\right)^{\frac{1}{2^{m-1}}},$$ which simplifies to \begin{equation}\label{eq44}
    	\kappa(Y)^{2^{m-1}-1}=\frac{2^{m2^{m-1}-2^{m}+1}}{\kappa(X)^{2^{m-1}}} \prod_{i=1}^{2^m-1}\kappa(X_{C_{i}}).\tag{4}
    \end{equation} 
    Since each $X_{C_{i}}$ is a $(\bbZ/2\bbZ)^{m-1}$-cover, by induction, we have\begin{equation}\label{eq55}
    	\kappa(X_{C_{i}})=\frac{2^{2^{m-1}-m}}{\kappa(X)^{2^{m-1}-2}}\prod_{j=1}^{2^{m-1}-1}\kappa(X_{H_{ij}}),\tag{5}
    \end{equation}  where each $H_{ij}$ is a subgroup of $G$ with index $2$ that contains $C_{i}$. Combining \eqref{eq44} with \eqref{eq55}, we have
 
 \begin{align}
 	\kappa(Y)^{2^{m-1}-1} &\overset{\eqref{eq44}}{=} \frac{2^{m2^{m-1}-2^{m}+1}}{\kappa(X)^{2^{m-1}}} \prod_{i=1}^{2^m-1}\kappa(X_{C_{i}})\\
 	                      &\overset{\eqref{eq55}}{=} \frac{2^{m2^{m-1}-2^{m}+1}}{\kappa(X)^{2^{m-1}}} \prod_{i=1}^{2^m-1} \left( \frac{2^{2^{m-1}-m}}{\kappa(X)^{2^{m-1}-2}}\prod_{j=1}^{2^{m-1}-1}\kappa(X_{H_{ij}}) \right)\\
 	                      &= \left(\frac{2^{2^{m}-m-1}}{\kappa(X)^{2^{m}-2}}\prod_{i=1}^{2^{m}-1}\kappa(X_{H_i}) \right)^{2^{m-1}-1 },
 \end{align} where $H_{1},\ldots,H_{2^m-1}$ are subgroups of $G$ with index $2$. Hence we conclude that
	$$\kappa(Y)=\frac{2^{2^{m}-m-1}}{\kappa(X)^{2^{m}-2}}\prod_{i=1}^{2^{m}-1}\kappa(X_{H_i}).$$	
	Thus, \cref{main2} leads to the formula in \cref{thm:HMSV} by induction. Therefore, \cref{main2} can be seen as a generalization of \cref{thm:HMSV}.
\end{eg}


\begin{eg}\label{eg:sym}
	Let $G=S_3$ be the symmetric group of degree 3. $G$ has five cyclic subgroups 
\[
C_{1}=\{1_G\},\ C_{2}=\{1_G,(12)\},\ C_{3}=\{1_G,(13)\},\ C_{4}=\{1_G,(23) \},\ C_{5}=\{1_G,(123),(132) \}.
\]
Let $Y/X$ be a Galois cover with $\Gal(Y/X)=G$. By \cref{main2}, we have 
\[
\kappa(X)=(6\kappa(Y))^{-\frac{1}{2}}\cdot(3\kappa(X_{2}))^{\frac{1}{3}}\cdot (3\kappa(X_{3}))^{\frac{1}{3}}\cdot (3\kappa(X_{4}))^{\frac{1}{3}}\cdot(2\kappa(X_5))^{\frac{1}{2}}, 
\]
where the $X_i$ are corresponding to $C_{i}$. 
Since $C_{2},C_{3},$ and $C_{4}$ are conjugate, it follows that $\kappa(X_{2})=\kappa(X_{3})=\kappa(X_{4})$. Then we obtain \begin{equation}
	\kappa(Y)= \frac{3\kappa(X_2) \kappa(X_5)^{2}}{\kappa(X)^{2}}.\tag{6} \label{abfor}
\end{equation}

Let $X$ be the bouquet graph with two loops. Take an orientation $S=\{e_1,e_2\}$ of $X$ and define a voltage assignment $\alpha\colon S\to G$ by $$\alpha(e_1)=(12),\ \alpha(e_2)=(123). $$
Then we obtain the Hasse diagram of $\ol{\scC_{G}}$ and the intermediate graphs of $Y/X$ as in Figure \ref{fig:S3}.

\begin{figure}[htbp]
\centering
\begin{tabular}{cc}
\begin{minipage}{0.45\linewidth}
\centering
\begin{tikzcd}
{} & \infty \arrow[rd,dash] \arrow[ld,dash] & {} \\
C_{2} \arrow[rd,dash] & {} & C_{5} \arrow[ld,dash] \\
{} & \{1_G\} & {}
\end{tikzcd}
\end{minipage}
&
\begin{minipage}{0.45\linewidth}
\centering
\begin{tikzcd}[sep=1.8em, font=\small,scale cd=0.8,nodes in empty cells]
{} & \begin{tikzpicture}[baseline={([yshift=-0.6ex] current bounding box.center)}]
\node[draw=none,minimum size=2cm,regular polygon,regular polygon sides=6] (a) {};
\fill (a.corner 1) circle[radius=0.7pt];
\fill (a.corner 3) circle[radius=0.7pt];
\foreach \y\z in {1/6,1/5,2/3,2/4,3/4,5/6}
\path (a.corner \y) edge (a.corner \z);
\path (a.corner 1) edge [bend left=20] (a.corner 2);
\path (a.corner 1) edge [bend right=20] (a.corner 2);
\path (a.corner 3) edge [bend left=20] (a.corner 5);
\path (a.corner 3) edge [bend right=20] (a.corner 5);
\path (a.corner 4) edge [bend left=20] (a.corner 6);
\path (a.corner 4) edge [bend right=20] (a.corner 6);
\end{tikzpicture} \arrow[dl] \arrow[dr] & {} \\
\begin{tikzpicture}[baseline={([yshift=-0.6ex] current bounding box.center)}]
\node[draw=none,minimum size=2cm,regular polygon,regular polygon sides=3] (a) {};
\fill (a.corner 1) circle[radius=0.7pt];
\fill (a.corner 3) circle[radius=0.7pt];
\foreach \y\z in {1/2,2/3,3/1}
\path (a.corner \y) edge (a.corner \z);
\path (a.corner 2) edge [bend left=40] (a.corner 3);
\path (a.corner 2) edge [bend right=40] (a.corner 3);
\draw (a.corner 1) to [in=50,out=130,distance=1cm,loop] (a.corner 1);
\end{tikzpicture} \arrow[dr]
& {} & \begin{tikzpicture}[baseline={([yshift=-0.6ex] current bounding box.center)}]
\node[draw=none,minimum size=1.5cm,regular polygon,rotate=-45,regular polygon sides=4] (a) {};
\fill (a.corner 1) circle[radius=0.7pt];
\fill (a.corner 3) circle[radius=0.7pt];
\path (a.corner 1) edge [bend left=40] (a.corner 3);
\path (a.corner 1) edge [bend right=40] (a.corner 3);
\draw (a.corner 3) to [in=150,out=210, distance=1cm,loop] (a.corner 3);
\draw (a.corner 1) to [in=30,out=330, distance=1cm,loop] (a.corner 1);
\end{tikzpicture} \arrow[ld] \\
{} & \begin{tikzpicture}[baseline={([yshift=-1.7ex] current bounding box.center)}]
\node[draw=none,minimum size=2cm,regular polygon,regular polygon sides=1] (a) {};
\foreach \x in {1}
\fill (a.corner \x) circle[radius=0.7pt];
\draw (a.corner 1) to [in=50,out=130, distance=1cm,loop] (a.corner 1);
\draw (a.corner 1) to [in=50,out=130,distance=0.5cm,loop] (a.corner 1);
\end{tikzpicture}
& {}
\end{tikzcd}
\end{minipage}
\end{tabular}

\caption{The Hasse diagram of $\ol{\scC_{G}}$ (left) and the intermediate graphs of $Y/X$ (right).}

\label{fig:S3}
\end{figure}

\noindent Using SageMath \cite{Sage}, we calculate$$\kappa(X_2)=2,\ \kappa(X_5)=7,\ \kappa(X)=1.$$ By the above formula (\ref{abfor}), we have
$$\kappa(Y)=\frac{3\kappa(X_2) \kappa(X_5)^{2}}{\kappa(X)^{2}}=294. $$
\end{eg}


\begin{eg}\label{eg:q8}
 Let $G=Q_{8}=\{\pm 1,\pm \textbf{i},\pm \textbf{j},\pm \textbf{k}\}$ be the quaternion group. Then $G$ is exceptional. $G$ has five cyclic subgroups 
\[
C_1=\{1\},\ C_{2}=\{\pm 1\},\ C_{3}=\{\pm 1,\pm \textbf{i}\},\ C_{4}=\{\pm 1,\pm \textbf{j}\},\ C_{5}=\{\pm 1,\pm \textbf{k}\}.
\] By \cref{main2}, we have 
\[
\kappa(X)=(8\kappa(Y))^{0}\cdot(4\kappa(X_{2}))^{-\frac{1}{2}}\cdot(2\kappa(X_{3}))^{\frac{1}{2}}\cdot(2\kappa(X_{4}))^{\frac{1}{2}}\cdot(2\kappa(X_{5}))^{\frac{1}{2}},
\]
and hence, we have 
\[
\kappa(X_{2})\kappa(X)^2=2\kappa(X_{3})\kappa(X_{4})\kappa(X_{5}).
\]
\end{eg}


\subsection{Non-existence of a spanning tree formula for cyclic Galois groups}\label{exist}

Finally, we consider the case where $G$ is a cyclic group. In this section, we prove \cref{main3}. 


\begin{dfn}\   Let $\bfp=(p_{1},\ldots,p_{\ell})$, $\bfa=(a_{1},\ldots,a_{\ell})$, $\bfb=(b_{1},\ldots,b_{\ell})\in \bbZ^{\ell}$.
	\begin{enumerate}  
		\item We define $$\bfp^{\bfa}\coloneq p_{1}^{a_{1}}\cdots p_{\ell}^{a_{\ell}}.$$
		\item The \tit{join} $\bfa\vee\bfb$ of $\bfa$ and $\bfb$ is defined by $$\bfa\vee\bfb\coloneq (\max\{a_{i},b_{i}\})_{i}.$$
		\item The \tit{meet} $\bfa\wedge\bfb$ of $\bfa$ and $\bfb$ is defined by $$\bfa\wedge\bfb\coloneq (\min \{a_{i},b_{i}\})_{i}.$$
		\item We define the relation $\bfa<\bfb$ on $\bbZ^{\ell}$ if there exists $1\leq i\leq \ell$ such that $a_{1}=b_{1},\cdots, a_{i-1}=b_{i-1}$ and $a_{i}<b_{i}$.  
	\end{enumerate}
\end{dfn} 


\begin{lem}[cf.~{\cite[Corollary 5.3]{Val3}}]\label{arcyclic}
	Let $G$ be a finite abelian group and $X$ be the bouquet graph. Let $\alpha\colon S\to G$ be a voltage assignment satisfying that $X(\alpha)$ is connected. If $\rho\in \Irr(G)$, then we have $$h_{X(\alpha)/X}(1,\rho)=\sum_{s\in S}(2-\rho(\alpha(s))-\rho(-\alpha(s))). $$
\end{lem}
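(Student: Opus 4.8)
The plan is to evaluate $h_{X(\alpha)/X}(u,\rho)=\det\bigl(I-A_{\rho}u+(D_{\rho}-I)u^2\bigr)$ directly from its definition and then specialize to $u=1$. The point is that, because the base graph is a bouquet and $\rho$ is one-dimensional, every matrix appearing here collapses to a scalar, so the determinant requires no genuine linear algebra.

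First I would record the representation-theoretic input: since $G$ is abelian, every $\rho\in\Irr(G)$ satisfies $\deg\rho=1$. The single vertex of the bouquet $X$ therefore contributes exactly one block, of size $\deg\rho=1$, to each of $A_{\rho}$ and $D_{\rho}$, so both are $1\times 1$ and the determinant in $h_{X(\alpha)/X}(u,\rho)$ is just the scalar itself.

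Next I would identify the two scalars using the voltage-assignment description of $A_{\rho}$ and $D_{\rho}$ (cf.~\cite{Val3}). The unique entry of $A_{\rho}$ is the sum over the edges of $X$ of $\rho$ evaluated on their voltages, i.e.\ $\sum_{e\in E_X}\rho(\alpha(e))$. Splitting $E_X$ into the orientation pairs $\{s,\ol s\}$ for $s\in S$ and using $\alpha(\ol s)=-\alpha(s)$ together with the fact that $\rho$ is a homomorphism yields $A_{\rho}=\sum_{s\in S}\bigl(\rho(\alpha(s))+\rho(-\alpha(s))\bigr)$. For the degree matrix, the unique vertex of the bouquet has degree $|E_X|=2|S|$ (each geometric loop contributing both $s$ and $\ol s$ to $E_{X,v}$ under Serre's convention), so $D_{\rho}=2|S|$.

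Finally I would substitute these into $h_{X(\alpha)/X}(u,\rho)=1-A_{\rho}u+(2|S|-1)u^2$ and set $u=1$, obtaining $h_{X(\alpha)/X}(1,\rho)=2|S|-A_{\rho}=\sum_{s\in S}\bigl(2-\rho(\alpha(s))-\rho(-\alpha(s))\bigr)$, as claimed. There is no real difficulty in this argument; the only step requiring care—and hence the closest thing to an obstacle—is correctly transcribing the definitions of $A_{\rho}$ and $D_{\rho}$ for the bouquet and confirming the degree count $2|S|$. Once that bookkeeping is pinned down, the identity is a one-line substitution, and the statement for $\rho=\rho_0$ (where it reads $0$) emerges as a consistency check with $h_X(1)=0$.
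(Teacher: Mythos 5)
Your proof is correct: since $G$ is abelian, every $\rho\in\Irr(G)$ has degree one, so $A_\rho=\sum_{s\in S}\bigl(\rho(\alpha(s))+\rho(-\alpha(s))\bigr)$ and $D_\rho=2|S|$ are scalars, and substituting into $h_{X(\alpha)/X}(u,\rho)=1-A_\rho u+(2|S|-1)u^2$ at $u=1$ gives exactly the claimed identity, with your degree count $2|S|$ (loops contributing both $s$ and $\ol{s}$) being the right convention. Note that the paper itself gives no proof of this lemma—it is quoted from \cite[Corollary 5.3]{Val3}—and your direct computation is essentially the standard derivation behind that cited result, so there is nothing further to reconcile.
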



\begin{lem}\label{deg}
Let $\bfp$, $\bfa\in \bbN^{\ell}$. Let $X$ be the bouquet graph with $t+1$ loops and let $G=\bbZ/\bfp^{\bfa}\bbZ$. Take an orientation $S=\{e_1, e_2,\ldots,e_{t+1}\}$ of $X$ and define a voltage assignment $\alpha\colon S\to G$ by $$\alpha(e_1)=\cdots=\alpha(e_{t})=\bfp^{\bfb}, \quad \alpha(e_{t+1})=1,$$ for $\bfb\in \bbN^{\ell}$. Then we have the following.  \begin{enumerate}
	\item $\kappa(X(\alpha))$ is a polynomial in $t$.
	\item The degree $\deg(\kappa(X(\alpha)))$ of $\kappa(X(\alpha))$ equals $\displaystyle{ \bfp^{\bfa}\cdot \left(1-\frac{1}{\bfp^{(\bfa-\bfb) \vee \bm{0}}}\right)}$.
\end{enumerate}
\end{lem}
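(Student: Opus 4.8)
The plan is to compute $\kappa(X(\alpha))$ explicitly through the Artin--Ihara $L$-function and then read off both assertions from the resulting product. First I would record the data of the base graph: the bouquet $X$ with $t+1$ loops has a single vertex, so $\kappa(X)=1$, while $\chi(X)=1-(t+1)=-t$, which is nonzero for $t\geq 1$. Write $N\coloneq\bfp^{\bfa}$, so that $G=\bbZ/N\bbZ$ is cyclic and every $\rho\in\Irr(G)$ is one-dimensional; moreover $X(\alpha)$ is connected because $\alpha(e_{t+1})=1$ already generates $G$. Fixing a primitive $N$-th root of unity $\zeta$, the nontrivial characters are $\rho_k(x)=\zeta^{kx}$ for $k=1,\ldots,N-1$. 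Since every $\deg\rho=1$ and $\kappa(X)=1$, Proposition \ref{formula} gives
\[
N\cdot\kappa(X(\alpha))=\prod_{k=1}^{N-1}h_{X(\alpha)/X}(1,\rho_k).
\]

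Next I would evaluate each factor with Lemma \ref{arcyclic}. Writing $m\coloneq\bfp^{\bfb}$ for the common voltage of $e_1,\ldots,e_t$ and using $\alpha(e_{t+1})=1$, the lemma yields
\[
h_{X(\alpha)/X}(1,\rho_k)=t\bigl(2-\zeta^{km}-\zeta^{-km}\bigr)+\bigl(2-\zeta^{k}-\zeta^{-k}\bigr)=c_k\,t+d_k,
\]
where $c_k\coloneq 2-\zeta^{km}-\zeta^{-km}$ and $d_k\coloneq 2-\zeta^{k}-\zeta^{-k}$ are each of the shape $2-2\cos\theta$, hence real and nonnegative. Consequently
\[
\kappa(X(\alpha))=\frac{1}{N}\prod_{k=1}^{N-1}(c_k\,t+d_k)
\]
is a polynomial in $t$; its coefficients are real, and in fact rational, since it agrees with the integer $\kappa(X(\alpha))$ at every $t\geq 1$. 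This proves (1).

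For (2) I would compute the degree of this product over $\bbC$, where degree is additive on nonzero factors. Each factor $c_k t+d_k$ is nonzero because $d_k=2-2\cos(2\pi k/N)\neq 0$ for $k\not\equiv 0\pmod N$, and it has degree $1$ precisely when $c_k\neq 0$. Now $c_k=0$ iff $\zeta^{km}=1$ iff $km\equiv 0\pmod N$, and the number of $k\in\{0,\ldots,N-1\}$ satisfying this is $\gcd(m,N)$; discarding $k=0$ leaves $\gcd(m,N)-1$ vanishing indices in $\{1,\ldots,N-1\}$. Hence
\[
\deg\kappa(X(\alpha))=(N-1)-(\gcd(m,N)-1)=N-\gcd(m,N).
\]

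It then remains to identify $N-\gcd(m,N)$ with the stated quantity, which is the one genuinely computational point. Using that the $p_i$ are distinct primes gives $\gcd(\bfp^{\bfb},\bfp^{\bfa})=\bfp^{\bfa\wedge\bfb}$, and the exponentwise identity $a_i-\max(a_i-b_i,0)=\min(a_i,b_i)$ gives $\bfp^{\bfa}/\bfp^{(\bfa-\bfb)\vee\bm{0}}=\bfp^{\bfa\wedge\bfb}$. Therefore
\[
N-\gcd(m,N)=\bfp^{\bfa}-\bfp^{\bfa\wedge\bfb}=\bfp^{\bfa}\left(1-\frac{1}{\bfp^{(\bfa-\bfb)\vee\bm{0}}}\right),
\]
as claimed. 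The main obstacle is not any single hard estimate but the careful bookkeeping: reducing the complexity to the product $\prod(c_k t+d_k)$, counting the vanishing leading coefficients via $\gcd(m,N)$, and matching exponents in the final identity; the non-vanishing of each factor (so that the degree is exactly the count) follows from $d_k\neq 0$.
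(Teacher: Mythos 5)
Your proposal is correct and follows essentially the same route as the paper: both reduce $\kappa(X(\alpha))$ via Proposition \ref{formula} and Lemma \ref{arcyclic} to the product $\frac{1}{\bfp^{\bfa}}\prod_{k=1}^{\bfp^{\bfa}-1}\bigl(t(2-\zeta^{kj}-\zeta^{-kj})+(2-\zeta^{k}-\zeta^{-k})\bigr)$ and then count the factors with vanishing leading coefficient, you via $\gcd(m,N)$ and the paper via zeros of $\sin(\pi kj/i)$, which is the same count. Your explicit checks that each constant term $d_k\neq 0$ and that $X(\alpha)$ is connected are points the paper leaves implicit, but the argument is the same.
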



\begin{proof}	

Let $i=\bfp^{\bfa}$ and $j=\bfp^{\bfb}$. By Proposition \ref{formula} and Lemma \ref{arcyclic}, we have
\begin{align}
	\kappa(X(\alpha)) &\overset{\text{\ref{formula}}}{=}\frac{\kappa(X)}{|G|}\cdot\prod_{\rho\in \Irr(G)\setminus \rho_{0}}h_{X(\alpha)/X}(1,\rho)\\ 
	&\overset{\text{\ref{arcyclic}}}{=} \frac{1}{i} \prod_{k=1}^{i-1} \left( (2-\zeta_{i}^{k}-\zeta_{i}^{-k})+t(2-\zeta_{i}^{k j}-\zeta_{i}^{-k j}) \right)\\
	&= \frac{1}{i}\prod_{k=1}^{i-1}\left(4\sin^2\frac{\pi k}{i}+4t\sin^2\frac{\pi k j}{i} \right),
\end{align}
where $\displaystyle{\zeta_{i}=\cos\frac{2\pi}{i}+\sqrt{-1}\sin\frac{2\pi}{i}}$, which is an $i$-th root of unity. Thus $\kappa(X(\alpha))$ is a polynomial in $t$, and the degree of $\kappa(X(\alpha))$ is obtained by 
\begin{align}
\deg(\kappa(X(\alpha))) 
&=(i-1)-|\{k\mid 1\leq k \leq i-1,\ \sin\frac{\pi k j}{i}=0 \}| \\
&=(i-1)-|\{k\mid1\leq k \leq i-1,\ i | k j \}| \\
&=(\bfp^{\bfa}-1)-(\bfp^{\bfa\wedge\bfb}-1)\\
&= \bfp^{\bfa}\cdot(1-\bfp^{(\bfa\wedge\bfb)-\bfa})\\
&= \bfp^{\bfa}\cdot\left(1-\frac{1}{\bfp^{(\bfa-\bfb)\vee\bm{0}}}\right).
\end{align}
\end{proof}
For a matrix $A$, let $A(m_1,m_2)$ denote the submatrix of $A$ with row $m_1$ and column $m_2$ deleted. If $m_1=m_2=m$, then we will write $A(m_1,m_2)$ as $A(m)$.  


\begin{lem}[Cauchy--Binet Formula, cf.~{\cite[p.214]{BJWG}}]\label{CB}
	Let $A$ and $B$ be $N\times N$ matrices. Then, for any $m_1$, $m_2$ with $1\leq m_1\leq N$ and $1\leq m_2\leq N$, we have
	
$$\det(AB(m_1,m_2))=\sum_{1\leq m\leq N}(\det (A(m_1,m)))(\det (B(m,m_2))).$$ 
	
\end{lem}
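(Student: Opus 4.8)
The plan is to establish the Cauchy--Binet formula, which is a standard tool in linear algebra. Since the paper cites this as a known result (referencing \cite[p.214]{BJWG}), I would treat it as a classical statement and give a clean self-contained argument rather than relying on the reference, so that the paper is more transparent. The core idea is to reduce both sides to a shared expression using the multilinearity of the determinant in the rows (or columns) of the matrices involved.

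First I would fix the notation carefully. We have $N\times N$ matrices $A$ and $B$, and we consider the product $C=AB$. Deleting row $m_1$ and column $m_2$ from $C$ yields the matrix $C(m_1,m_2)$, and the entry in position $(p,q)$ of $C$ is $\sum_{k=1}^{N} A_{p,k}B_{k,q}$. The key observation is that deleting row $m_1$ from $C$ amounts to working with the rows of $A$ indexed by $\{1,\dots,N\}\setminus\{m_1\}$, and deleting column $m_2$ amounts to working with the columns of $B$ indexed by $\{1,\dots,N\}\setminus\{m_2\}$. So $C(m_1,m_2) = \widehat{A}\,\widehat{B}$, where $\widehat{A}$ is the $(N-1)\times N$ matrix obtained by deleting row $m_1$ from $A$ and $\widehat{B}$ is the $N\times(N-1)$ matrix obtained by deleting column $m_2$ from $B$. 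This recasts the problem as computing the determinant of a product of a $(N-1)\times N$ matrix with an $N\times(N-1)$ matrix.

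Next I would apply the general (rectangular) Cauchy--Binet identity: for $\widehat{A}$ of size $(N-1)\times N$ and $\widehat{B}$ of size $N\times(N-1)$, one has $\det(\widehat{A}\,\widehat{B}) = \sum_{m=1}^{N}\det(\widehat{A}_{[m]})\det(\widehat{B}^{[m]})$, where $\widehat{A}_{[m]}$ is the $(N-1)\times(N-1)$ submatrix of $\widehat{A}$ using only column $m$'s complement, i.e.\ deleting column $m$ from $\widehat{A}$, and similarly $\widehat{B}^{[m]}$ deletes row $m$ from $\widehat{B}$. Translating back, $\det(\widehat{A}_{[m]}) = \det(A(m_1,m))$ and $\det(\widehat{B}^{[m]}) = \det(B(m,m_2))$, which gives exactly the claimed formula. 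To prove the rectangular identity itself, I would expand $\det(\widehat{A}\,\widehat{B})$ using multilinearity of the determinant across the rows of $\widehat{A}\,\widehat{B}$ (each such row is a sum over an index $k$ of terms $A_{\cdot,k}B_{k,\cdot}$), distribute to obtain a sum over functions assigning an index to each of the $N-1$ rows, and then observe that any function hitting a repeated column index contributes a determinant with proportional columns, hence zero; the surviving terms correspond to strictly the injective choices, which after collecting signs via permutations reassemble into the stated sum over the single omitted index $m$.

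The main obstacle I expect is purely bookkeeping: tracking the signs and the correspondence between the abstract index sets and the deleted-row/deleted-column submatrices $A(m_1,m)$ and $B(m,m_2)$. In particular one must verify that the sign factors arising from reordering columns in the multilinear expansion match precisely the cofactor-style signs implicit in the submatrix notation, so that no spurious $(-1)$ appears. Since the identity is classical and cited, it would also be entirely acceptable to simply invoke \cite[p.214]{BJWG} and omit the proof; the argument above is the route I would take if a self-contained derivation were desired.
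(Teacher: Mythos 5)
The paper offers no proof of this lemma at all: it is stated as a known fact and quoted directly from \cite[p.214]{BJWG}, so there is no internal argument to compare yours against; your proposal is doing strictly more than the paper does. That said, your self-contained derivation is correct. The key reduction is valid: for $p\neq m_1$ and $q\neq m_2$ the $(p,q)$ entry of $AB$ uses only row $p$ of $A$ and column $q$ of $B$ (and all summation indices $k=1,\dots,N$), so $(AB)(m_1,m_2)=\widehat{A}\,\widehat{B}$ with $\widehat{A}$ of size $(N-1)\times N$ and $\widehat{B}$ of size $N\times(N-1)$. The rectangular Cauchy--Binet identity then sums over $(N-1)$-element subsets of $\{1,\dots,N\}$, each of which is the complement of a single index $m$, and the corresponding minors are exactly $\det(A(m_1,m))$ and $\det(B(m,m_2))$, giving the stated formula; your multilinearity proof of the rectangular identity is the standard one and goes through. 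One minor slip in wording: expanding along the rows of $\widehat{A}\widehat{B}$, a term in which two positions select the same index $k$ yields a determinant with two \emph{equal rows} (both equal to row $k$ of $\widehat{B}$, after scalars are pulled out), not ``proportional columns''; the vanishing conclusion is unaffected. What your route buys is transparency: the paper's later application of this lemma (it is applied twice to the triple product in the proof of Lemma \ref{matrix}) rests on a black-box citation, whereas your argument makes the statement verifiable in place, at the cost of a page of sign bookkeeping that the citation avoids.
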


If $P$ is an $m\times n$ matrix and $Q$ is a $p\times q$ matrix, then the \tit{Kronecker product} $P\otimes Q$ is defined by the $pm\times qn$ block matrix $$P\otimes Q= \begin{pmatrix} 
  p_{11}Q & p_{12}Q & \dots  & p_{1n}Q \\
  p_{21}Q & p_{22}Q & \dots & p_{2n}Q \\
  \vdots & \vdots & \ddots & \vdots \\
  p_{m1}Q & p_{m2}Q & \dots & p_{mn}Q
\end{pmatrix}. $$


\begin{lem}\label{matrix}
Let $\bfp=(p_{1},\ldots,p_{\ell})$, $\bfs=(s_{1},\ldots,s_{\ell}) \in \bbN^{\ell}$. The matrix $M$ is defined by $$M\coloneq \left(1-\frac{1}{\bfp^{(\bfa+\bfb-\bfs) \vee \bm{0}}}\right)_{\bm{0}<\bfa,\bfb\leq \bfs} \in \bbM_{T-1}(\bbQ),$$ where $\bfa,\bfb\in [0,s_{1}]\times\cdots\times[0,s_{\ell}] $ and $\displaystyle{T \coloneq \prod_{k=1}^{\ell}(s_{k}+1)}$.

Then the determinant of $M$ is given by $$\det(M)= (-1)^{T-1}\prod_{i=1}^{\ell}\left(\frac{1}{p_{i}}-1\right)^{\frac{s_{i}}{s_{i}+1}T}. $$ In particular, we have $\det(M)\neq 0$.

\end{lem}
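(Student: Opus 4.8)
The plan is to compute $\det(M)$ by recognizing $M$ as a Kronecker product, which reduces the $\ell$-dimensional problem to the one-dimensional case, and then to evaluate the one-variable determinant explicitly. The key structural observation is that the matrix entry $1 - \bfp^{-(\bfa+\bfb-\bfs)\vee\bm{0}}$ does \emph{not} factor as a product over the coordinates $i$, because of the join with $\bm{0}$. So $M$ is not literally a Kronecker product of $\ell$ smaller matrices. I would instead introduce, for each coordinate $i$, the $s_i\times s_i$ matrix
\[
M_i \coloneq \left(1 - \frac{1}{p_i^{(a+b-s_i)\vee 0}}\right)_{1\le a,b\le s_i}\in\bbM_{s_i}(\bbQ),
\]
and prove first the one-dimensional statement, namely
\[
\det(M_i) = (-1)^{s_i}\left(\frac{1}{p_i}-1\right)^{s_i}.
\]

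The main technical device for the one-variable case is the Cauchy--Binet formula (\cref{CB}), or more directly a sequence of elementary row/column operations. First I would factor the entry: writing $x = 1/p_i$, the $(a,b)$ entry is $1 - x^{(a+b-s_i)\vee 0}$, which equals $1$ when $a+b\le s_i$ and $1 - x^{a+b-s_i}$ when $a+b>s_i$. Subtracting consecutive rows (row $a$ minus row $a{+}1$) kills the constant $1$ in the region $a+b\le s_i$ and turns the nontrivial entries into differences $x^{a+b-s_i}(x-1)$ up to a boundary correction; iterating these anti-diagonal operations should triangularize $M_i$ with the factor $(x-1)$ appearing once per row, yielding the claimed $(-1)^{s_i}(x-1)^{s_i}$. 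The boundary row $a+b=s_i$ (where the exponent switches from $0$ to positive) is where one must track the bookkeeping carefully, and I expect this to be the main obstacle: showing that after the telescoping operations the surviving triangular diagonal is exactly $(x-1)$ in each of the $s_i$ positions, with no stray factors from the $a+b=s_i$ transition.

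Once the one-dimensional determinants are in hand, I would assemble the full determinant. Even though $M$ is not a Kronecker product, the matrix obtained from $M$ by the analogous multi-dimensional telescoping (difference operators applied independently in each coordinate direction) \emph{does} decompose as $\bigotimes_{i=1}^\ell$ of the per-coordinate difference operators, because the join $\bigvee_i$ distributes over the tensor structure once the constant terms have been eliminated in each direction. Concretely, I would apply the coordinatewise difference operators $D_i$ to $M$ and show the result is the Kronecker product of the triangularized $M_i$; since row/column operations do not change the determinant and $\det(P\otimes Q)=(\det P)^{q}(\det Q)^{p}$ for square $P,Q$ of sizes $p,q$, one gets
\[
\det(M) = \prod_{i=1}^{\ell}\det(M_i)^{\,T/s_i'},
\]
where the exponent counts how many copies of the $i$-th factor appear, namely $T/(s_i+1)\cdot s_i = \frac{s_i}{s_i+1}T$ after accounting for the restriction $\bm{0}<\bfa,\bfb$ (which deletes the all-zero index and accounts for the $T-1$ size). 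Substituting $\det(M_i)=(-1)^{s_i}(1/p_i-1)^{s_i}$ and collecting the signs into $(-1)^{T-1}$ then gives the stated formula
\[
\det(M) = (-1)^{T-1}\prod_{i=1}^{\ell}\left(\frac{1}{p_i}-1\right)^{\frac{s_i}{s_i+1}T}.
\]
Since each factor $1/p_i - 1 \ne 0$ (as $p_i\ge 2$), nonvanishing of $\det(M)$ follows immediately. The exponent and sign bookkeeping in passing from the $(T-1)$-sized restricted matrix to the tensor factors is the delicate part, and I would verify it by a small consistency check (e.g. $\ell=1$ and $\ell=2$) before committing to the general index count.
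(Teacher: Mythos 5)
Your guiding instinct---reduce to one coordinate and assemble via Kronecker structure---matches the paper's, and the multiplicity $\tfrac{s_i}{s_i+1}T$ you predict is the right one; but the assembly step as you describe it cannot be carried out. You propose to apply coordinatewise difference operators to $M$ itself and to show the result is a Kronecker product of triangularized $s_i\times s_i$ factors $M_i$. This fails for two concrete reasons. First, the sizes do not match: $M$ has size $T-1=\prod_i(s_i+1)-1$, while $\bigotimes_i M_i$ has size $\prod_i s_i$, and these are never equal once $\ell\ge 2$; indeed your own exponent $T/(s_i+1)$ could only arise from Kronecker factors of size $s_i+1$, not $s_i$. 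Second, the difference operator in direction $i$ must subtract the row indexed by $\bfa-e_i$ from the row indexed by $\bfa$, and when $\bfa=e_i$ that row is the deleted index $\bm{0}$, so the operations are not even defined on $M$. The idea you are missing is the paper's first move: \emph{extend} $M$ to the $T\times T$ matrix $\ol{M}=\bigl(1-\bfp^{-(\bfa+\bfb-\bfs)\vee\bm{0}}\bigr)_{\bm{0}\le\bfa,\bfb\le\bfs}$, whose added row and column are identically zero. Since the join is computed coordinatewise, the non-constant part of each entry does factor over the coordinates (your diagnosis is backwards: the join is not the obstruction to factoring, the leading ``$1-{}$'' is, and the all-ones tensor is exactly what absorbs it), so $\ol{M}=J_1\otimes\cdots\otimes J_\ell-K_1\otimes\cdots\otimes K_\ell$ is a difference of two genuine Kronecker products with factors of size $s_i+1$. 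The paper then reduces each factor by unimodular $L_i,R_i$ and uses \cref{CB} together with the vanishing of every minor $\ol{M}(m_1,m_2)$ with $(m_1,m_2)\neq(1,1)$ to convert $\det(M)=\det(\ol{M}(1))$ into $(-1)^{T-1}\det(K_1'\otimes\cdots\otimes K_\ell')$, at which point the Kronecker determinant formula applies to the $(s_i+1)$-sized factors.

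Separately, your one-variable determinant is wrong. The matrix $M_i$ is anti-triangular (its entry vanishes exactly when $a+b\le s_i$) with all anti-diagonal entries equal to $1-1/p_i$, so $\det(M_i)=(-1)^{s_i(s_i-1)/2}(1-1/p_i)^{s_i}$; your claimed value $(-1)^{s_i}(1/p_i-1)^{s_i}=(1-1/p_i)^{s_i}$ is always positive and disagrees whenever $s_i\equiv 2,3 \pmod 4$. For instance, for $s_i=2$ the matrix is $\left(\begin{smallmatrix}0 & 1-1/p_i\\ 1-1/p_i & 1-1/p_i^2\end{smallmatrix}\right)$, with determinant $-(1-1/p_i)^2$. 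Because of this, ``collecting the signs into $(-1)^{T-1}$'' is not a deferrable bookkeeping step: the anti-triangular reductions contribute a sign $(-1)^{s_i(s_i-1)/2}$ per tensor factor, and these do not collapse to $(-1)^{T-1}$ in general. The weaker conclusion $\det(M)\neq 0$, which is all that \cref{main3} needs, is insensitive to signs, but it still requires expressing $\det(M)$ as a nonzero product, and that is precisely what your decomposition step fails to establish.
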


\begin{proof}
  Consider the matrix $$\ol{M}\coloneq \left(1-\frac{1}{\bfp^{(\bfa+\bfb-\bfs) \vee \bm{0}}}\right)_{\bm{0}\leq\bfa,\bfb\leq \bfs}\in \bbM_{T}(\bbQ).$$
Then we obtain 
\begin{equation}\label{5}
	 \ol{M}=(J_{1}\otimes\cdots\otimes J_{{\ell}})-(K_{1}\otimes\cdots\otimes K_{\ell}), \tag{7}
\end{equation} where
$$J_{i}\coloneq \begin{pmatrix} 
  1 & 1 & \dots & 1 & 1 \\
  1 & 1 & \ddots & 1 & 1 \\
  \vdots & \vdots & \ddots &\vdots & \vdots \\
  1 & 1 & \dots & 1 & 1 \\
  1 & 1 & \dots & 1 & 1
\end{pmatrix},\ K_i \coloneq \begin{pmatrix} 
  1 & 1 & \dots & 1 & 1 \\
  1 & 1 & \dots & 1 & 1/p_i \\
  \vdots & \vdots & \iddots & \iddots & \vdots \\ 
  1 & 1 & \iddots & 1/p_i^{s_i - 2} & 1/p_i^{s_i - 1} \\
  1 & 1/p_i & \dots & 1/p_i^{s_i - 1} & 1/p_i^{s_i} 
\end{pmatrix} \in \bbM_{s_{i}+1}(\bbQ), $$ since, for $\bm{0}\leq \bfa,\bfb\leq \bfs$, we have \[
{(K_i)}_{a_i b_i} = 
\begin{cases}
\hfill 1 \hfill & \text{if } a_i + b_i - s_i \leq 0, \\
1/p_i^{a_i + b_i - s_i} & \text{if } a_i + b_i - s_i \geq 0.
\end{cases}
\]
\noindent Let 
\[
L_{i}\coloneq \begin{pmatrix}
1 & 0 & \dots & 0 & 0 \\
-1 & 1 & \dots & 0 & 0 \\
\vdots & \vdots & \ddots & \vdots & \vdots \\
-1 & 0 & \dots & 1 & 0 \\
-1 & 0 & \dots & 0 & 1
\end{pmatrix},\
R_{i}\coloneq \begin{pmatrix}
1 & -1 & \dots & -1 & -1 \\
0 & 1 & \dots & 0 & 0 \\
\vdots & \vdots & \ddots & \vdots & \vdots \\
0 & 0 & \dots & 1 & 0 \\
0 & 0 & \dots & 0 & 1
\end{pmatrix} \in \bbM_{s_{i}+1}(\bbZ).
\]


\noindent Then we have $$J^{\prime}_{i}\coloneq L_{i}J_{i}R_{i}= \begin{pmatrix}
1 & 0 & \dots & 0 & 0 \\
0 & 0 & \dots & 0 & 0 \\
\vdots & \vdots & \ddots & \vdots & \vdots \\
0 & 0 & \dots & 0 & 0 \\
0 & 0 & \dots & 0 & 0
\end{pmatrix},$$
$$K'_{i} \coloneq L_{i} K_{i} R_{i} = \begin{pmatrix}
1 & 0 & \dots & 0 & 0 \\
0 & 0 & \dots & 0 & 1/p_i - 1 \\
\vdots & \vdots & \iddots & \iddots & \vdots \\
0 & 0 & \iddots & 1/p_i^{s_i - 2} - 1 & 1/p_i^{s_i - 1} - 1 \\
0 & 1/p_i - 1 & \dots & 1/p_i^{s_i - 1} - 1 & 1/p_i^{s_i} - 1
\end{pmatrix}. $$
\noindent Since all entries in the first column and the first row of matrix $\ol{M}$ equal $0$, we have \begin{equation}\label{6}
	 \det (\ol{M}(m_1.m_2))=0 \tag{8}
\end{equation} if $m_1\neq 1$ or $m_2\neq 1$.  

\noindent Hence we conclude that
 \begin{align}
		\det(M) &=\det(\ol{M}(1))\\
		        &\overset{\eqref{6}}{=}\hspace{-15pt} \sum_{1\leq m_1,m_2\leq T}\hspace{-10pt} \det ((L_1\otimes \cdots \otimes L_{\ell})(1,m_1))\cdot \det (\ol{M}(m_1.m_2))\cdot \det((R_1\otimes \cdots \otimes R_{\ell})(m_2,1))\\
		        &\overset{\text{\ref{CB}}}{=}\det(((L_1\otimes \cdots \otimes L_{\ell})\cdot \ol{M} \cdot(R_1\otimes \cdots \otimes R_{\ell}))(1))\\
		        &\overset{\eqref{5}}{=}\det(((J^{\prime}_{1}\otimes\cdots\otimes J^{\prime}_{\ell})-(K^{\prime}_{1}\otimes\cdots\otimes K^{\prime}_{\ell}))(1))\\
		        &=\det(-(K^{\prime}_{1}\otimes\cdots\otimes K^{\prime}_{\ell})(1))\\
		        &=(-1)^{T-1}\det((K^{\prime}_{1}\otimes\cdots\otimes K^{\prime}_{\ell})(1))\\
		        &= (-1)^{T-1}\det((K^{\prime}_{1}\otimes\cdots\otimes K^{\prime}_{\ell}))\\
		       &=(-1)^{T-1}\prod_{i=1}^{\ell}\left(\frac{1}{p_{i}}-1\right)^{\frac{s_{i}}{s_{i}+1}T}.
	\end{align}
\end{proof}
\begin{thm}\label{main3} Let $G=\bbZ/n\bbZ$ be a nontrivial cyclic group, and let $H_{1},\ldots,H_{k}$ be all of its subgroups. Then there do not exist $\bm{0}\neq (m_1,\ldots,m_{k})\in \bbZ^{k}$ and $q\in \bbQ$ satisfying the following condition: For any Galois cover $Y/X$ with $\Gal(Y/X)=G$, it follows that $$q\cdot \prod_{i=1}^{k}\kappa(X_{i})^{m_{i}}=1,$$ where $X_{i}$ is the intermediate graph corresponding to $H_{i}$. 
\end{thm}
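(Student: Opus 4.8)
The plan is to argue by contradiction: assume that a nonzero vector $(m_1,\ldots,m_k)\in\bbZ^k$ and $q\in\bbQ$ satisfy the relation for \emph{every} $G$-cover, and then construct enough covers to force all $m_i=0$. Write $G=\bbZ/\mathbf{p}^{\mathbf{a}}\bbZ$ with $n=\mathbf{p}^{\mathbf{a}}=\prod_{i=1}^{\ell}p_i^{a_i}$. Since $G$ is cyclic, its subgroups correspond to divisors of $n$, hence to vectors $\mathbf{s}'$ with $\mathbf{0}\leq\mathbf{s}'\leq\mathbf{a}$; I write $H_{\mathbf{s}'}$ for the subgroup with $G/H_{\mathbf{s}'}\cong\bbZ/\mathbf{p}^{\mathbf{s}'}\bbZ$ and let $m_{\mathbf{s}'}$ be its exponent. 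Thus $\mathbf{s}'=\mathbf{0}$ gives $H=G$ (so $X_{H_{\mathbf{0}}}=X$), $\mathbf{s}'=\mathbf{a}$ gives $H=\{1_G\}$ (so $X_{H_{\mathbf{a}}}=Y$), and $k=\prod_i(a_i+1)$.

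The main construction is to vary the bouquet covers of \cref{deg}. For each $\mathbf{c}$ with $\mathbf{0}\leq\mathbf{c}\leq\mathbf{a}$, let $X$ be the bouquet with $t+1$ loops and take $\alpha(e_1)=\cdots=\alpha(e_t)=\mathbf{p}^{\mathbf{c}}$, $\alpha(e_{t+1})=1$; since $e_{t+1}$ maps to a generator, $X(\alpha)$ is connected. Each intermediate graph $X_{H_{\mathbf{s}'}}$ is the derived graph of $X$ for the reduced voltage $S\to G\to G/H_{\mathbf{s}'}\cong\bbZ/\mathbf{p}^{\mathbf{s}'}\bbZ$, so \cref{deg} applies to it and shows $\kappa(X_{H_{\mathbf{s}'}})$ is a polynomial in $t$ of degree $\mathbf{p}^{\mathbf{s}'}\bigl(1-1/\mathbf{p}^{(\mathbf{s}'-\mathbf{c})\vee\mathbf{0}}\bigr)$. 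Because the hypothetical relation $q\prod_{\mathbf{s}'}\kappa(X_{H_{\mathbf{s}'}})^{m_{\mathbf{s}'}}=1$ holds for all $t$, comparing leading powers of $t$ (letting $t\to\infty$ in the logarithm) forces $\sum_{\mathbf{s}'}m_{\mathbf{s}'}\deg\kappa(X_{H_{\mathbf{s}'}})=0$. Writing $n_{\mathbf{s}'}\coloneq m_{\mathbf{s}'}\mathbf{p}^{\mathbf{s}'}$ and substituting $\mathbf{c}=\mathbf{a}-\mathbf{b}$, this becomes
\[
\sum_{\mathbf{0}\leq\mathbf{s}'\leq\mathbf{a}} n_{\mathbf{s}'}\left(1-\frac{1}{\mathbf{p}^{(\mathbf{s}'+\mathbf{b}-\mathbf{a})\vee\mathbf{0}}}\right)=0 \qquad\text{for every } \mathbf{b}\in[\mathbf{0},\mathbf{a}].
\]
This is precisely the system $\ol{M}\,\mathbf{n}=\mathbf{0}$ for the matrix $\ol{M}$ appearing in the proof of \cref{matrix} with its parameter taken to be $\mathbf{a}$. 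As observed there, the row and column indexed by $\mathbf{0}$ vanish (since $\mathbf{b}\leq\mathbf{a}$ makes the relevant exponent $\mathbf{0}$); hence $n_{\mathbf{0}}$ never appears and the equations with $\mathbf{b}\neq\mathbf{0}$ reduce to $M\mathbf{n}'=\mathbf{0}$, where $M$ is the matrix of \cref{matrix} and $\mathbf{n}'=(n_{\mathbf{s}'})_{\mathbf{s}'\neq\mathbf{0}}$. Since $\det M\neq 0$ by \cref{matrix}, we obtain $n_{\mathbf{s}'}=0$, and therefore $m_{\mathbf{s}'}=0$, for all $\mathbf{s}'\neq\mathbf{0}$.

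It remains to treat the single leftover exponent $m_{\mathbf{0}}$ on $\kappa(X)=\kappa(X_{H_{\mathbf{0}}})$, which the bouquet family cannot detect because there $\kappa(X)=1$. The relation has now collapsed to $q\cdot\kappa(X)^{m_{\mathbf{0}}}=1$ for every $G$-cover $Y/X$, and I would finish by evaluating it on two base graphs of different complexity: on a bouquet ($\kappa=1$) this forces $q=1$, and on a connected graph $X_0$ with $\kappa(X_0)>1$ admitting a connected $G$-cover (for instance a triangle, or two vertices joined by two edges, whose first Betti number is $1$ so that a voltage hitting a generator of $\bbZ/n\bbZ$ exists) it gives $\kappa(X_0)^{m_{\mathbf{0}}}=1$ with $\kappa(X_0)>1$, whence $m_{\mathbf{0}}=0$. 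Then all $m_i=0$, contradicting $(m_1,\ldots,m_k)\neq\mathbf{0}$.

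The computational heart is already packaged in \cref{deg} and \cref{matrix}, so the main conceptual work is twofold: converting the multiplicative relation into a linear relation among polynomial degrees via the asymptotics in $t$, and recognizing that after the reindexing $\mathbf{c}=\mathbf{a}-\mathbf{b}$ the resulting coefficient matrix is exactly the nonsingular $M$. I expect the main obstacle to be the bookkeeping around the index $\mathbf{s}'=\mathbf{0}$ (the subgroup $H=G$): it is invisible to the bouquet construction, so it must be handled by a separate choice of base graph, and one must verify that a base graph with $\kappa>1$ admitting a connected cyclic cover genuinely exists.
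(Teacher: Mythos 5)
Your proposal is correct and follows essentially the same route as the paper's proof: the same bouquet construction with voltages $\mathbf{p}^{\mathbf{c}}$ and $1$, the same degree-in-$t$ comparison via \cref{deg}, the same reduction to the nonsingular matrix of \cref{matrix} (your reindexing $\mathbf{c}=\mathbf{a}-\mathbf{b}$ and scaling $n_{\mathbf{s}'}=m_{\mathbf{s}'}\mathbf{p}^{\mathbf{s}'}$ are exactly what the paper does implicitly), and the same final step killing $m_{\mathbf{0}}$ by evaluating on a cycle graph. Your slightly more explicit bookkeeping (checking connectivity of $X(\alpha)$, first forcing $q=1$ on the bouquet before using a base graph with $\kappa>1$) is a faithful elaboration of the paper's closing remark that $\kappa(X)$ can take arbitrary values.
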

\begin{proof}
Suppose the assertion of the theorem is false. Since $G$ is cyclic, we can write $n$ in the form $n=\bfp^{\bfs}$, where $\bfs\in \bbN^{\ell}$ and $p_{1},\ldots,p_{\ell}$ are the prime factors of $n$. Then every subgroup of $G$ is written as $\bfp^{\bfa}\bbZ/\bfp^{\bfs}\bbZ$ for some $\bfa\in [0,s_{1}]\times\cdots\times[0,s_{\ell}]$. Let $X$ be the bouquet graph with $t+1$ loops, and let $S=\{e_1, e_2,\ldots,e_{t+1}\}$ be an orientation of $X$. For $\bfb\in [0,s_{1}]\times\cdots\times[0,s_{\ell}]$, define a voltage assignment $\alpha\colon S\to G$ by $$\alpha(e_1)=\cdots=\alpha(e_{t})=\bfp^{\bfs-\bfb}, \quad \alpha(e_{t+1})=1.$$
For $\bfa \in [0,s_{1}]\times\cdots\times[0,s_{\ell}] $, we consider the voltage assignment $$\alpha_{\bfa}\colon S\to \bbZ/\bfp^{\bfa}\bbZ $$ defined by the composition $$S \xrightarrow{\alpha} G\to \bbZ/\bfp^{\bfa}\bbZ .$$
Since an intermediate graph of $X(\alpha)/X$ corresponding to $\bfp^{\bfa} \bbZ/\bfp^{\bfs}\bbZ$ is $X(\alpha_{\bfa})$, we see from assumption that  $$ q\cdot \prod_{\bm{0}\leq \bfa \leq \bfb} \kappa(X(\alpha_{\bfa}))^{m_{\bfa}}=1. $$
Since $\kappa(X_{\bm{0}})=\kappa(X)=1$, \begin{equation}\label{eq1}
q\cdot \prod_{\bm{0}<\bfa\leq \bfs} \kappa(X(\alpha_{\bfa}))^{m_{\bfa}}=1.\tag{9} 
\end{equation} 
By Lemma \ref{deg}, considering the degree of both sides of \eqref{eq1}, we obtain $$\sum_{\bm{0}<\bfa\leq \bfs} \bfp^{\bfa}\cdot \left(1-\frac{1}{\bfp^{(\bfa+\bfb-\bfs) \vee \bm{0}}}\right)\cdot m_{\bfa}=0.$$ By Lemma \ref{matrix}, we have $m_{\bfa}=0$ for any $\bm{0}<\bfa\leq \bfs$. Furthermore, since $\kappa(X)$ can take arbitrary value (for example, choosing $X$ as a cycle graph), we must have $m_{\bm{0}}=0$, a contradiction.
\end{proof}


\appendix

\section{The M\"{o}bius Functions of Finite Posets}\label{mobi fu}
		
We review the theory of the M\"{o}bius functions on finite partially ordered sets. For more details, we refer to \cite{Encomb} and \cite{Zas}.


\begin{dfn}\label{mob def}
	Let $\scL$ be a finite partially ordered set. The \tit{M\"{o}bius function} of $\scL $ is the function $$\mu_{\scL}\colon \scL \times \scL \to \bbZ$$ satisfying the following condition $$\mu_{\scL}(x,y)=\begin{cases}
	\hfill 1 \hfill & \text{if $x=y$ ,}\\
	\displaystyle{-\sum_{\substack{z\in \scL; \\ x\leq z< y}}\mu_{\scL}(x,z)} & \text{if $x<y$ ,}\\
	\hfill 0 \hfill & \text{otherwise}.
\end{cases} $$
\end{dfn}
These conditions can be equivalently written as (see Proposition 7.1.2. in \cite{Zas}): $$\mu_{\scL}(x,y)=\begin{cases}
	\hfill 1 \hfill & \text{if $x=y$ ,}\\
	\displaystyle{-\sum_{\substack{z\in \scL; \\ x< z\leq y}}\mu_{\scL}(z,y)} & \text{if $x<y$ ,}\\
	\hfill 0 \hfill  & \text{otherwise}.
\end{cases} $$


\begin{eg}
Let $n$ be a positive integer. Let $D_n$ be the set of all positive divisors of $n$. We define the order $i\leq j$ on $D_n$ by the condition that $i$ divides $j$. Then $D_n$ becomes a partially ordered set. Then the M\"{o}bius function $\mu_{D_{n}}$ of $D_n$ satisfies the following $$\mu_{D_n}(i,j)=\begin{cases}
	\hfill 1\hfill & \text{if $j/i=1$ ,}\\
	(-1)^k & \text{if $j/i$ is a product of $k$ distinct prime numbers,}\\
	\hfill 0 \hfill & \text{otherwise}.
\end{cases} $$
Thus, using the classical M\"{o}bius function $\mu$ in number theory, we may write $\mu_{D_n}(i,j) = \mu(j/i)$ if $i$ divides $j$.
\end{eg}

\begin{lem}\label{grp lat}
	The M\"{o}bius function $\mu_{\scC_{G}}$ for the partially ordered set $\scC_{G}$ of subgroups of a cyclic group $G$ satisfies
$$\mu_{\scC_{G}}(C, B)=\mu([B : C]),$$
where $\mu$ is the classical M\"{o}bius function in number theory.
\end{lem}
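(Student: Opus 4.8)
The plan is to reduce everything to the computation of the M\"{o}bius function of a divisor lattice, which is already recorded in the example preceding this lemma. Since $G$ is cyclic, every subgroup of $G$ is cyclic, so $\scC_{G}$ is precisely the full lattice of subgroups of $G$. Writing $n=|G|$, the classification of subgroups of a cyclic group gives exactly one subgroup of each order dividing $n$. First I would record the map
\[
\varphi\colon \scC_{G}\to D_{n},\qquad K\mapsto |K|,
\]
where $D_{n}$ carries the divisibility order, and check that it is an \emph{isomorphism of posets}. Bijectivity is the uniqueness-of-subgroups statement, and the crucial equivalence is that $K\subseteq K'$ if and only if $|K|$ divides $|K'|$: the forward direction is Lagrange's theorem, while the reverse uses that the unique subgroup of order $|K|$ must coincide with the unique subgroup of that order contained in $K'$.

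Next I would invoke the fact that the M\"{o}bius function is preserved by poset isomorphisms: if $\varphi\colon \scL\to \scL'$ is an isomorphism of finite posets, then $\mu_{\scL}(x,y)=\mu_{\scL'}(\varphi(x),\varphi(y))$ for all $x,y$. This follows immediately from the recursive formula in Definition \ref{mob def}, since $\mu_{\scL}(x,y)$ is determined entirely by the sub-poset $\{z\mid x\le z<y\}$; a short induction on the length of a maximal chain in $[x,y]$ then propagates the equality through $\varphi$. Applying this to the isomorphism $\varphi$ above yields, for any $C\subseteq B$ in $\scC_{G}$,
\[
\mu_{\scC_{G}}(C,B)=\mu_{D_{n}}(|C|,|B|).
\]

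Finally I would combine this with the example computing the M\"{o}bius function of the divisor lattice, namely $\mu_{D_{n}}(i,j)=\mu(j/i)$ whenever $i$ divides $j$ (with $\mu$ the classical M\"{o}bius function). Since $|C|$ divides $|B|$ and $[B:C]=|B|/|C|$, this gives
\[
\mu_{\scC_{G}}(C,B)=\mu\!\left(|B|/|C|\right)=\mu([B:C]),
\]
as claimed. I expect no genuine obstacle here: the entire content is the identification $\scC_{G}\cong D_{n}$. The only point demanding care is verifying that $\varphi$ is an order-isomorphism rather than merely an order-preserving bijection, which rests on the uniqueness of subgroups of each order in a cyclic group; the invariance of $\mu$ under poset isomorphism is routine but worth stating explicitly rather than taking for granted.
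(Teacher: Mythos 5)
Your proposal is correct and takes essentially the same approach as the paper, whose entire proof is the one-line observation that $\scC_{G}$ is isomorphic as a poset to $D_{n}$, so that the lemma follows from the example computing $\mu_{D_{n}}$. You have simply made explicit what the paper leaves implicit: the isomorphism $K\mapsto |K|$, its verification via uniqueness of subgroups of each order in a cyclic group, and the invariance of the M\"{o}bius function under poset isomorphisms.
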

\begin{proof}
 Since the finite partially ordered set $\scC_{G}$ is isomorphic to the partially ordered set $D_n$, the lemma follows. 
\end{proof}
\noindent The following lemma plays a crucial role in the proof of \cref{main1}. 
\begin{prp}[M\"{o}bius Inversion Formula, {\cite[Proposition 3.7.2]{Encomb}}]\label{Mö inv}
	Let $\scL $ be a finite partially ordered set. Let $f,\ g\colon\scL\to G$ be two maps, where $G$ is an abelian group. Then we have \begin{enumerate} 
		\item \(\displaystyle{g(x)=\sum_{y\geq x}f(y)}\) if and only if \(\displaystyle{f(x)=\sum_{y\geq x}\mu_{\scL}(x,y) g(y)},\)
		\item \(\displaystyle{g(y)=\sum_{x\leq y}f(x)}\) if and only if \(\displaystyle{f(y)=\sum_{x\leq y}\mu_{\scL}(x,y)g(x)}\).
	\end{enumerate}
\end{prp}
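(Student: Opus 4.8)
The plan is to reduce both equivalences to the \emph{orthogonality relations} satisfied by $\mu_{\scL}$ and then interchange orders of summation, which is legitimate because $\scL$ is finite. First I would extract from \cref{mob def} the identity
\[
\sum_{\substack{z\in\scL;\\ x\leq z\leq y}}\mu_{\scL}(x,z)=
\begin{cases}1 & \text{if }x=y,\\ 0 & \text{if }x<y,\end{cases}
\]
which is immediate: for $x=y$ it is just $\mu_{\scL}(x,x)=1$, and for $x<y$ it is exactly the recursion $\mu_{\scL}(x,y)=-\sum_{x\leq z<y}\mu_{\scL}(x,z)$ rearranged. From the equivalent form of the definition recorded after \cref{mob def}, I would likewise record the companion identity $\sum_{x\leq z\leq y}\mu_{\scL}(z,y)$, which equals $1$ if $x=y$ and $0$ if $x<y$.

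To prove the forward direction of (1), assume $g(x)=\sum_{y\geq x}f(y)$ for all $x$ and compute
\begin{align*}
\sum_{y\geq x}\mu_{\scL}(x,y)\,g(y)
&=\sum_{y\geq x}\mu_{\scL}(x,y)\sum_{z\geq y}f(z)
=\sum_{z\geq x}\Bigl(\sum_{x\leq y\leq z}\mu_{\scL}(x,y)\Bigr)f(z)
=f(x),
\end{align*}
where the middle equality merely reorganizes the finite double sum over $\{(y,z):x\leq y\leq z\}$, and the last equality applies the first orthogonality identity, so that only the term $z=x$ survives. For the converse I would run this computation in reverse, starting from $f(x)=\sum_{y\geq x}\mu_{\scL}(x,y)g(y)$, substituting into $\sum_{y\geq x}f(y)$, and collapsing the inner sum with the companion identity in the form $\sum_{x\leq y\leq z}\mu_{\scL}(y,z)=[x=z]$ (the Iverson bracket).

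Finally, part (2) follows either by applying part (1) to the opposite poset $\scL^{\mathrm{op}}$, whose M\"{o}bius function is $\mu_{\scL^{\mathrm{op}}}(x,y)=\mu_{\scL}(y,x)$, or by repeating the two computations above with all inequalities reversed. I do not expect a genuine obstacle here: the only point requiring care is the bookkeeping of which orthogonality identity is used in each of the four implications, since the two directions contract the sum on opposite arguments of $\mu_{\scL}$.
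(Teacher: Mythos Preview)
Your argument is correct and is the standard proof of M\"{o}bius inversion: extract the two orthogonality relations from the recursive definition (and its equivalent form), then interchange the order of summation in the relevant finite double sum. The bookkeeping you flag---which of the two identities collapses the inner sum in each of the four implications---is handled correctly, and the reduction of (2) to (1) via $\scL^{\mathrm{op}}$ is clean.

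There is nothing to compare against, however: the paper does not prove this proposition at all. It is stated in the appendix with a citation to \cite[Proposition 3.7.2]{Encomb} and used as a black box in the proof of \cref{main1}. So your write-up supplies a proof where the paper simply invokes the literature; in that sense it is more self-contained than the paper itself, and your approach is exactly the one found in the cited reference.
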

\newpage
\begin{spacing}{1.2}
\begin{longtable}{|c|c|c|c|}
\caption{The list of groups with order at most $24$}\label{figure}\\

        \hline
        Order & Groups & not irreducibly represented & not exceptional\\ \hline\hline
        1 & $C_1$ & $\times$ & $\times$ \\ \hline
        2 & $C_2$ & $\times$ & $\times$ \\ \hline
        3 & $C_3$ & $\times$ & $\times$ \\ \hline
        4 & $C_4$ & $\times$ & $\times$ \\ \cline{2-4}
          & $C^2_2$ & $\checkmark$ & $\checkmark$ \\ \hline
        5 & $C_5$ & $\times$ & $\times$ \\ \hline
        6 & $C_6$ & $\times$ & $\times$ \\ \cline{2-4}
          & $S_3$ & $\times$ & $\checkmark$ \\ \hline
        7 & $C_7$ & $\times$ & $\times$ \\ \hline 
        8 & $C_8$ & $\times$ & $\times$ \\ \cline{2-4}
          & $D_4$ & $\times$ & $\checkmark$ \\ \cline{2-4}
          & $Q_8$ & $\times$ & $\times$ \\ \cline{2-4}
          & $C_2^3$ & $\checkmark$ & $\checkmark$ \\ \cline{2-4}
          & $C_2\times C_4 $ & $\checkmark$ & $\checkmark$ \\ \hline
        9 & $C_9$ & $\times$ & $\times$ \\ \cline{2-4}
          & $C^2_3$ & $\checkmark$ & $\checkmark$ \\ \hline
       10 & $C_{10}$ & $\times$ & $\times$ \\ \cline{2-4}
          & $D_5$ & $\times$ & $\checkmark$ \\ \hline
       11 & $C_{11}$ & $\times$ & $\times$ \\ \hline
       12 & $C_{12}$ & $\times$ & $\times$ \\ \cline{2-4}
          & $A_4$ & $\times$ & $\checkmark$ \\ \cline{2-4}
          & $D_6$ & $\times$ & $\checkmark$ \\ \cline{2-4}
          & $Dic_3$ & $\times$ & $\times$ \\ \cline{2-4}
          & $C_2\times C_6$ & $\checkmark$ & $\times$ \\ \hline
       13 & $C_{13}$ & $\times$ & $\times$ \\ \hline
       14 & $C_{14}$ & $\times$ & $\times$ \\ \cline{2-4}
          & $D_7$ & $\times$ & $\checkmark$ \\ \hline
       15 & $C_{15}$ & $\times$ & $\times$ \\ \hline
       16 & $C_{16}$ & $\times$ & $\times$ \\ \cline{2-4}
          & $D_8$ & $\times$ & $\checkmark$ \\ \cline{2-4}
          & $Q_{16}$ & $\times$ & $\times$ \\ \cline{2-4}
          & $SD_{16}$ & $\times$ & $\checkmark$ \\ \cline{2-4}
          & $M_4(2)$ & $\times$ & $\checkmark$ \\ \cline{2-4}
          & $C_4\circ D_4$ & $\times$ & $\checkmark$ \\ \cline{2-4}
          & $C_2^2\rtimes C_4$ & $\checkmark$ & $\checkmark$ \\ \cline{2-4}
          & $C_4^2$ & $\checkmark$ & $\checkmark$ \\ \cline{2-4}
          & $C_2^4$ & $\checkmark$ & $\checkmark$ \\ \cline{2-4}
          & $C_2\times C_8$ & $\checkmark$ & $\checkmark$ \\ \cline{2-4}
          & $C_2^2\times C_4$ & $\checkmark$ & $\checkmark$ \\ \cline{2-4}
          & $C_2\times D_4$ & $\checkmark$ & $\checkmark$ \\ \cline{2-4}
          & $C_2\times Q_8$ & $\checkmark$ & $\checkmark$ \\ \cline{2-4}
          & $C_4\rtimes C_4$ & $\checkmark$ & $\checkmark$ \\ \hline
       17 & $C_{17}$ & $\times$ & $\times$ \\ \hline   
       18 & $C_{18}$ & $\times$ & $\times$ \\ \cline{2-4}
          & $D_9$ & $\times$ & $\checkmark$ \\ \cline{2-4}
          & $C_3\rtimes S_3$ & $\checkmark$ & $\checkmark$ \\ \cline{2-4}
          & $C_3\times C_6$ & $\checkmark$ & $\times$ \\ \cline{2-4}
          & $C_3\times S_3$ & $\times$ & $\checkmark$ \\ \hline
       19 & $C_{19}$ & $\times$ & $\times$ \\ \hline
       20 & $C_{20}$ & $\times$ & $\times$ \\ \cline{2-4}
          & $D_{10}$ & $\times$ & $\checkmark$ \\ \cline{2-4}
          & $F_5$ & $\times$ & $\checkmark$ \\ \cline{2-4}
          & $Dic_5$ & $\times$ & $\times$ \\ \cline{2-4}
          & $C_2\times C_{10}$ & $\checkmark$ & $\times$ \\ \hline
       21 & $C_{21}$ & $\times$ & $\times$ \\ \cline{2-4}
          & $C_7\rtimes C_3$ & $\times$ & $\checkmark$ \\ \hline
       22 & $C_{22}$ & $\times$ & $\times$ \\ \cline{2-4}
          & $D_{11}$ & $\times$ & $\checkmark$ \\ \hline
       23 & $C_{23}$ & $\times$ & $\times$ \\ \hline
       24 & $C_{24}$ & $\times$ & $\times$ \\ \cline{2-4}
          & $S_4$ & $\times$ & $\checkmark$ \\ \cline{2-4}
          & $D_{12}$ & $\times$ & $\checkmark$  \\ \cline{2-4}
          & $Dic_6$ & $\times$ & $\times$ \\ \cline{2-4}
          & $SL_2(\bbF_3)$ & $\times$ & $\times$ \\ \cline{2-4}
          & $C_3\rtimes D_4$ & $\times$ & $\checkmark$  \\ \cline{2-4}
          & $C_3\rtimes C_8$ & $\times$ & $\times$ \\ \cline{2-4}
          & $C_2\times C_{12}$ & $\checkmark$ & $\times$ \\ \cline{2-4}
          & $C^2_2\times C_6$ & $\checkmark$ & $\times$ \\ \cline{2-4}
          & $C_2\times A_4$ & $\times$ & $\checkmark$ \\ \cline{2-4}
          & $C_4\times S_3$ & $\times$ & $\checkmark$  \\ \cline{2-4}
          & $C_3\times D_4$ & $\times$ & $\times$ \\ \cline{2-4}
          & $C_2^2\times S_3$ & $\checkmark$ & $\checkmark$  \\ \cline{2-4}
          & $C_3\times Q_8$ & $\times$ & $\times$ \\ \cline{2-4}
          & $C_2\times Dic_3$ & $\checkmark$ & $\times$ \\ \hline
\end{longtable}

\end{spacing}

\bibliographystyle{amsalpha} \bibliography{birational}	
\end{document}